\newcommand{\ZZ}{\mathbb{Z}}
\newcommand{\RR}{\mathbb{R}}
\newcommand{\A}{\mathcal{A}}
\newcommand{\C}{\mathcal{C}}
\renewcommand{\O}{\mathcal{O}}
\renewcommand{\L}{\mathcal{L}}
\newcommand{\F}{\mathcal{F}}
\newcommand{\D}{\mathcal{D}}
\newcommand{\X}{\mathcal{X}}
\newcommand{\J}{\mathcal{J}}
\newcommand{\M}{\mathcal{M}}
\renewcommand{\S}{\mathcal{S}}
\renewcommand{\P}{\mathcal{P}}
\newcommand{\m}{\mathfrak{m}}
\newcommand{\mEhA}{\mathfrak{m}_{(e,h,A)}}
\newcommand{\n}{\mathfrak{n}}
\newcommand{\nZ}{\mathfrak{n}_Z}
\newcommand{\Mbargn}{\overline{\mathcal{M}}_{g,n}}
\newcommand{\Ggn}{G_{g,n}}
\pgfplotsset{compat=1.17}
\newcommand{\Address}{{
  \bigskip
  \footnotesize

  M.~Fava, \textsc{Department of Mathematical Sciences, University of Liverpool,
    Liverpool, L69 7ZL, United Kingdom}\par\nopagebreak
  \textit{E-mail address}: \texttt{marco.fava@liverpool.ac.uk}
  }}
\newtheorem{theorem}{Theorem}[section]
\newtheorem{corollary}[theorem]{Corollary}
\newtheorem{lemma}[theorem]{Lemma}
\newtheorem{proposition}[theorem]{Proposition}
\newtheorem{proposition-definition}[theorem]{Proposition-Definition}
\newtheorem{lemma-definition}[theorem]{Lemma-Definition}
\theoremstyle{definition}
\newtheorem{definition}[theorem]{Definition}
\newtheorem{example}[theorem]{Example}
\newtheorem{remark}[theorem]{Remark}
\numberwithin{equation}{section}
\newenvironment{manualtheorem}[1]{%
  \manualtheoreminner
}{\endmanualtheoreminner}
\title{On a combinatorial classification of fine compactified universal Jacobians}
\begin{document}

\author{Marco Fava}

	\begin{abstract}
		Extending the definition of $V$-stability conditions, given by Viviani in the recent preprint \cite{viviani2023new}, we introduce the notion of \emph{universal stability conditions}. Building on results by Pagani and Tommasi in \cite{pagani2023stability}, we show that fine compactified universal Jacobians, that is, fine compactified Jacobians over the moduli spaces of stable pointed curves $\Mbargn$, are combinatorially classified by universal stability conditions. 

        We use these stability conditions to show the following. The inclusion of fine compactified universal Jacobians of type $(g,n)$ whose fibres over geometric points are classical, that is, they are constructed by some numerical polarisation, into the class of all fine compactified universal Jacobians, is strict, in general, for any $g\geq 2$. This answers a question of Pagani and Tommasi.
	\end{abstract} %%%%%%%%%
	
	\maketitle
	
	\tableofcontents
	
	\section{Introduction}
 
	In this paper, we extensively use results from the recent preprints \cite{pagani2023stability} and \cite{viviani2023new}, by Pagani-Tommasi and Viviani respectively, to give a complete classification of fine compactified universal (over each moduli stack $\overline{\mathcal{M}}_{g,n}$ of stable $n$-pointed curves of genus $g$) Jacobians in terms of universal $V$-stability conditions. This addresses \cite[Open Question~(4), p.7]{viviani2023new}. In doing so, we answer, for all $g\geq 2$, the following (raised in  \cite[Section $10$]{pagani2023stability}):

    \begin{manualtheorem}{$\star$}\label{questintro}
    Are there fine compactified universal  Jacobians whose fibres over some geometric points are not classical?
    \end{manualtheorem}

    We answer this question by giving, for each $g \geq 2$, the minimal natural number $n$ such that at least one such fine compactified universal Jacobian exists over $\overline{\mathcal{M}}_{g,n}$.

	\vspace{0.5cm}

 {\renewcommand*{\thetheorem}{\Alph{theorem}}
 \setcounter{theorem}{0}
	
 Let $(g,n)$ be a hyperbolic pair, that is $g,n\in\ZZ_{\geq0}$ such that $2g-2+n>0$.
 By a \emph{degree $d$ fine compactified universal Jacobian of type $(g,n)$} we mean an open and proper substack of the moduli stack of rank~$1$ torsion free sheaves for the universal family $\overline{\C}_{g,n}/\Mbargn$.
	
	Compactified Jacobians have been constructed via different methods by Oda-Seshadri \cite{Oda1979CompactificationsOT}, for a single nodal curve, by Altman-Kleiman \cite{ALTMAN198050}, Esteves \cite{esteves}, Simpson \cite{simpson} for families of curves over a scheme, and by  Caporaso \cite{caporaso}, Pandharipande \cite{pandharipande1995compactification} for the universal family.
	Following \cite{viviani2023new}, we will call these \emph{classical fine compactified Jacobians} (see Definitions~\ref{classicaljacdef} and~\ref{fclassicalcj}-~\ref{phitocdef} for a precise definition). 
 
 The first construction of space of stability conditions for classical fine compactified universal Jacobians was given in \cite{Kass_2019}. Each of the maximal dimensional chambers of the stability space defined in loc.cit. corresponds to a fine compactified universal Jacobian, that we will refer to as \emph{classical fine compactified universal Jacobians} (Definition~\ref{classicaljacdef}). (These  were earlier constructed in \cite{melo2016compactifications} and \cite{Kass_2017}). The existence of non-classical fine compactified universal Jacobians was first observed in \cite{PTgenus1}, and the examples were given when $g=1$ and $n \geq 6$. However, to the best of our knowledge, all known examples of fine compactified universal Jacobians have fibres over each geometric point of $\overline{\mathcal{M}}_{g,n}$ that are classical compactified Jacobians (in the sense of the previous paragraph). We call such compactified universal Jacobians \emph{semi-classical} (Definition~\ref{classicaljacdef}).
 
 Recently, Pagani and Tommasi gave in \cite{pagani2023stability} a complete combinatorial classification of all fine compactified universal Jacobians. This classification is fairly complicated as it uses a large amount of data. For example trying to approach Question~\ref{questintro} by means of this combinatorial description seems very hard.

	Inspired by the more recent paper of Viviani \cite{viviani2023new}, in this work we introduce the notion of \emph{universal stability conditions} (Definition \ref{cstabuniv}), whose  restriction to a curve $X$ (a $c$-stability condition, Definition~\ref{cstabGamma}) agrees with the notion of a $V$-stability condition introduced in \cite{viviani2023new}. We show that the set of universal stability conditions is precisely the combinatorial counterpart of the set of fine compactified universal Jacobians. It is by means of this explicit combinatorial description that we are able to fully answer Question \ref{questintro}. 
	\vspace{0.5cm}

    Given a degree $d$ universal stability condition $\m$ of type $(g,n)$, we define its associated fine compactified universal Jacobian $\overline{\J}_{g,n}(\m)$ by taking the set of \emph{$\m$-stable} families of sheaves in $\textup{Simp}(\overline{\C}_{g,n}/\Mbargn)$ (Definition~\ref{stablesheavesdef}). 
    
    Viceversa, let $\overline{\J}_{g,n}$ be a degree $d$ fine compactified universal Jacobian.     
    We can obtain its \emph{associated universal stability condition} $\m(\overline{\J}_{g,n})$ (Definition~\ref{jtomdef}) by taking, for every components of all the marked \emph{vine curves} (i.e. curves with two irreducible components and no self intersections) $[X,\{p_1,\ldots,p_n\}]\in\Mbargn$, the minimum degrees of sheaves $F$ such that  $[X,\{p_1,\ldots,p_n\},F]\in\overline{\J}_{g,n}$.

    These assignments give the following, which shows that fine compactified universal Jacobians are, in fact, fully classified by universal stability conditions.
 
 \begin{theorem}\label{mainthm}
    
    The map
		\begin{align*}
		\begin{Bmatrix}
			\textup{Degree $d$ fine compactified}\\ \textup{universal Jacobians of type $(g,n)$}
		\end{Bmatrix}&\to \begin{Bmatrix}
			\textup{Degree $d$ universal}\\ \textup{stability conditions of type $(g,n)$}
		\end{Bmatrix}\\
			\overline{\J}_{g,n}&\mapsto\m(\overline{\J}_{g,n}), 
		\end{align*}
	is a bijection with inverse $\m\mapsto \overline{\J}_{g,n}(\m)$.
    \end{theorem}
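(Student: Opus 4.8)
The plan is to establish the bijection by showing the two assignments $\overline{\J}_{g,n}\mapsto\m(\overline{\J}_{g,n})$ and $\m\mapsto\overline{\J}_{g,n}(\m)$ are mutually inverse. I would organise the argument around three claims: first, that for every universal stability condition $\m$ the substack $\overline{\J}_{g,n}(\m)$ is genuinely a fine compactified universal Jacobian (openness and properness); second, that $\m\mapsto\overline{\J}_{g,n}(\m)\mapsto\m(\overline{\J}_{g,n}(\m))$ recovers $\m$; and third, that $\overline{\J}_{g,n}\mapsto\m(\overline{\J}_{g,n})\mapsto\overline{\J}_{g,n}(\m(\overline{\J}_{g,n}))$ recovers $\overline{\J}_{g,n}$. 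The strategy throughout is to reduce each global statement to the fibrewise one over geometric points, where the classification of fine compactified Jacobians over a single curve in terms of $V$-stability conditions (due to Viviani) is available, and then to control how these fibrewise data glue in families by testing on the vine curves, which is exactly where the definition of $\m(\overline{\J}_{g,n})$ lives.

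\medskip

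For the first claim I would argue that $\m$-stability is an open condition in $\textup{Simp}(\overline{\C}_{g,n}/\Mbargn)$, so $\overline{\J}_{g,n}(\m)$ is open, and then check properness via the valuative criterion: given a family of sheaves over a punctured disc (or trait), the universal stability condition selects a unique stable limit, using that $V$-stability conditions produce a unique stable representative in each relevant combinatorial class on the special fibre. For the second claim, the key observation is that $\m$ is entirely determined by its values on vine curves, since a universal stability condition is, by its very definition (Definition~\ref{cstabuniv}), the compatible assignment of such minimal-degree data; thus computing the minimum restriction degrees of the sheaves in $\overline{\J}_{g,n}(\m)$ over vine curves must return precisely the combinatorial datum $\m$ we started from. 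This reduces to unwinding Definition~\ref{stablesheavesdef} and Definition~\ref{jtomdef} on the two-component curves.

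\medskip

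The third claim is where the main obstacle lies, and it is the heart of the theorem. Starting from an abstract fine compactified universal Jacobian $\overline{\J}_{g,n}$, I extract $\m(\overline{\J}_{g,n})$ from vine curves and must prove that $\overline{\J}_{g,n}(\m(\overline{\J}_{g,n}))$ equals $\overline{\J}_{g,n}$, not merely that they have the same fibres over vine curves. The difficulty is that a fine compactified universal Jacobian carries, a priori, much more data than its vine-curve restrictions (this is precisely why the Pagani--Tommasi classification is so intricate), so I must show that the vine-curve data already rigidifies the whole family. The plan is to invoke the Pagani--Tommasi results: their classification identifies the combinatorial invariants controlling a fine compactified universal Jacobian, and the technical core is to prove that each such invariant is recovered from, and compatible with, the minimal-degree function on vine curves. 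Concretely I expect to show that over an arbitrary geometric point the fibre of $\overline{\J}_{g,n}$ is the $V$-stable locus for the restriction of $\m(\overline{\J}_{g,n})$, by propagating the vine-curve constraints through degenerations connecting a general curve to its boundary strata, and then to conclude by openness that two fine compactified universal Jacobians agreeing fibrewise must coincide as substacks.

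\medskip

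The anticipated obstacle is therefore the passage from fibrewise agreement to global agreement, and more specifically the verification that the vine-curve data suffices to determine the $V$-stability condition on every fibre. I would address this by exploiting that any stable curve specialises, under suitable one-parameter families, to curves whose combinatorial type is controlled by edge-contractions and that the relevant numerical stability inequalities are tested precisely on the boundary divisors parametrising vine-type degenerations. Granting the fibrewise statement, the equality of the two substacks follows since both are open and proper over $\Mbargn$ with identical geometric fibres inside the same ambient $\textup{Simp}(\overline{\C}_{g,n}/\Mbargn)$, and an open immersion of proper stacks that is bijective on geometric points is an isomorphism.
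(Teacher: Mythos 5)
Your plan follows essentially the same route as the paper: both reduce everything to Viviani's fibrewise correspondence between fine compactified Jacobians and $V$-stability conditions together with the Pagani--Tommasi classification of fine compactified universal Jacobians, which is exactly what the paper invokes (their Theorem~6.3 and Corollary~7.11, via Remark~\ref{jtomremark} and Lemma-Definition~\ref{mtojlemma}) to get well-definedness of $\m\mapsto\overline{\J}_{g,n}(\m)$ and both composition identities. The only difference is packaging: the steps you propose to argue directly (openness, properness via the valuative criterion, recovery of the minimal degrees on vine curves, and propagation of vine-curve data to all fibres) are precisely the statements the paper outsources to those citations rather than reproving.
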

	
	\vspace{0.5cm}
	
    We observe that a universal stability condition can be described as the limit of a collection of $c$-stability conditions that are \emph{compatible with $\Ggn$-morphisms} (Definition~\ref{compatiblewithmorphs}) and agree on their common domains. 
 
    Similarly to the case of $V$-stability conditions on a graph $\Gamma$, a $\Ggn$-morphisms compatible numerical polarisation $\phi^\Gamma$ induces a classical $c$-stability condition $\m^\Gamma(\phi^\Gamma)$ (Definition~\ref{phitocdef}).

    However, not all universal stability conditions arise as limits of classical $c$-stability conditions.
    In fact, in Section~\ref{seccomparison}, we find suitable non classical $c$-stability conditions on trivalent graphs in $\Ggn$, which are restriction of a universal stability condition. Furthermore, we show that, given any genus $g$, such graphs have the minimal number of markings $n$ for this to be possible. Hence, we deduce our main result,  answering Question~\ref{questintro}:
	\begin{theorem} \label{theoremintro} Let $(g,n)$ be a hyperbolic pair.
		\begin{enumerate}
			\item[1.] The inclusion
			\begin{equation*}
				\begin{Bmatrix}
				\textup{Degree $d$ semi-classical fine compactified}\\ 
                    \textup{universal Jacobians of type $(g,n)$}
				\end{Bmatrix}
				\hookrightarrow
				\begin{Bmatrix}
					\textup{Degree $d$ fine compactified}\\ \textup{universal Jacobians of type $(g,n)$}
				\end{Bmatrix}
			\end{equation*}
			is a bijection if and only if any of the following occurs:
			\begin{itemize}
                \item $g=0$ (straightforward).
				\item $n=0$ (shown in \cite[Theorem~9.6]{pagani2023stability}, see also Example~\ref{exg1}).
				\item $g=1$ (shown in  \cite[Proposition~3.15]{PTgenus1}).
				\item $g=2$ and $n\leq5$ (Example \ref{ex26}, Remark~\ref{rhys25}).
				\item $g=3$ and $n=1$ (Example \ref{ex32}, Proposition \ref{prop31}).
			\end{itemize}
			\item[2.] There are a finite number of equivalence classes of degree $d$ fine compactified universal Jacobians over $\overline{\C}_{g,n}\to\Mbargn$ up to the translation action by the universal, relative degree~$0$ line bundles group $\textup{PicRel}^0_{g,n}(\ZZ)$.
		\end{enumerate}
		
	\end{theorem}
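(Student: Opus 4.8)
The plan is to derive both statements from the combinatorial dictionary of Theorem~\ref{mainthm}, which identifies degree~$d$ fine compactified universal Jacobians with degree~$d$ universal stability conditions. Under this bijection, the fibre of $\overline{\J}_{g,n}(\m)$ over a stable pointed curve with dual graph $\Gamma$ is a fine classical compactified Jacobian exactly when the restricted $c$-stability condition (Definition~\ref{cstabGamma}) is \emph{classical}, that is of the form $\m^\Gamma(\phi)$ for a numerical polarisation $\phi$ (Definition~\ref{phitocdef}). Hence Part~1 is equivalent to the assertion that every universal stability condition of type $(g,n)$ restricts to a classical $c$-stability condition on every $\Gamma\in\Ggn$ precisely when $(g,n)$ lies in the listed set.

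For the \emph{if} direction I would argue case by case. The cases $g=0$, $n=0$ and $g=1$ are immediate or quoted from the cited references. For the remaining finite list $(g,n)\in\{(2,n):n\le5\}\cup\{(3,1)\}$, I would reduce to the finitely many trivalent graphs in $\Ggn$ and check directly, using the defining inequalities of a $c$-stability condition together with compatibility with $\Ggn$-morphisms (Definition~\ref{compatiblewithmorphs}), that every admissible assignment equals some $\m^\Gamma(\phi)$. As both the number of graphs and the range of admissible degree distributions are bounded for fixed $(g,n,d)$, this is a finite combinatorial verification.

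For the \emph{only if} direction, which simultaneously pins down the minimal $n$ for each $g\ge2$, the approach is constructive. At the threshold pair $(g,n)$ I would produce a trivalent graph $\Gamma$ of genus $g$ with exactly $n$ legs carrying a non-classical $c$-stability condition $\m^\Gamma$, and then extend it to an honest universal stability condition by prescribing compatible $c$-stability conditions on all $\Ggn$-morphism targets that agree on common domains and passing to the induced limit; by Theorem~\ref{mainthm} the resulting object is a fine compactified universal Jacobian with non-classical fibre over the chosen curve. Larger $n$ for the same $g$ are then obtained by pulling back along the forgetful morphisms that add markings, so that non-classicality is preserved. I expect the construction of $\m^\Gamma$ together with the verification that it extends compatibly over all of $\Ggn$ to be the principal obstacle: one must exploit enough local freedom to break classicality while respecting the rigid global consistency imposed by the morphism-compatibility axioms, and then show that below the stated thresholds this freedom vanishes, forcing every $c$-stability condition to be classical and thereby establishing minimality.

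Part~2 is lighter and can be handled purely combinatorially. As indicated by Definition~\ref{jtomdef}, a universal stability condition is determined by its stability values on the finitely many vine curves of type $(g,n)$, and these values satisfy the bounded inequalities of Definition~\ref{cstabuniv}. The translation action of $\textup{PicRel}^0_{g,n}(\ZZ)$ shifts these values compatibly, so I would use it to normalise the assignment on a spanning set of vine curves; the residual data then lies in an explicitly bounded region of a finite-rank lattice, yielding only finitely many orbits. The single point needing care is that the normalisation be compatible with the gluing relations coming from $\Ggn$-morphisms, which follows from the same compatibility already used in Part~1.
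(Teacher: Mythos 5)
Your high-level strategy coincides with the paper's: both pass through the dictionary of Theorem~\ref{mainthm}, prove the ``bijection'' cases by case analysis (citations for $g=0$, $n=0$, $g=1$, plus finite combinatorial checks for $(2,n\leq5)$ and $(3,1)$), prove strictness by exhibiting non-classical stability conditions at the threshold pairs and extending to larger $n$, and obtain Part~2 by normalising under the $\textup{PicRel}^0_{g,n}(\ZZ)$-action. However, the proposal has a genuine gap: it defers precisely the content that constitutes the proof. The ``only if'' direction requires actually writing down, at each threshold pair $(2,6)$, $(3,2)$ and $(g,1)$ for $g\geq 4$, a universal stability condition whose restriction to some trivalent graph is non-classical, and verifying both axioms of Definition~\ref{cstabuniv} for it; this is what the paper does in Examples~\ref{ex26}, \ref{ex32} and \ref{exg1}, and it is delicate because a non-classical $c$-stability condition on a single graph need not extend to a universal one --- indeed the existence of thresholds in $n$ is exactly the statement that below them no such global extension can break classicality. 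You name this ``the principal obstacle'' but do not overcome it, and likewise the classicality verifications at $(3,1)$ (the paper's Proposition~\ref{prop31}, an explicit solvability analysis of the system \eqref{phisyst}) and at $(2,n\leq5)$ (Remark~\ref{rhys25}, resting on machine computation) are asserted as ``finite combinatorial verification'' without being performed. A proof of this theorem without these constructions and checks is a plan, not a proof.

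Two secondary points. First, your finite verification for the ``if'' cases is quantified ``for fixed $(g,n,d)$'', but $d$ ranges over all integers; one must first invoke the translation normalisation (Lemma~\ref{translationlemma}) to reduce to finitely many stability conditions, as the paper does in Proposition~\ref{prop23}. Second, your Part~2 argument implicitly assumes $g\geq2$ and $n\geq1$: the generators $\O(\Sigma_1-\Sigma_j)$ and $\O((2g-2)\Sigma_1)\otimes\omega_\pi^{\otimes-1}$, and the normalisation range $\{0,\dots,2g-3\}$, are meaningless for $n=0$ or $g\leq1$, which is why the paper treats $g=0$, $g=1$ and $n=0$ by separate arguments (triviality, \cite[Theorem~6.5]{pagani2022geometry}, and \cite[Theorem~9.6]{pagani2023stability} combined with \cite[Remark~5.11]{Kass_2019} respectively). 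Also note that your proposed mechanism for increasing $n$ --- pulling back along forgetful morphisms --- differs from the paper's purely combinatorial ``trivial extension'' of the stability condition; it is plausible but would require checking that the pullback of a fine compactified universal Jacobian along the stabilisation-twisted forgetful map is again one, and that non-classicality of a fibre persists, neither of which is automatic.
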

	
	More specifically, given any fine compactified universal Jacobian , we can decide whether it is classical or semi-classical by analysing the system of inequalities induced by its associated universal stability condition, via Proposition~\ref{phitocprop}.
	
    We can also ask the related question: what are the pairs $(g,n)$ such that the set of classical fine compactified universal Jacobians is strictly included in the set of all fine compactified universal Jacobians? (The answer to this  was already known for $g\leq1$ by work of \cite{PTgenus1} and for $n=0$ by work of \cite{pagani2023stability}). 
    
    The $n$ given in Theorem~\ref{theoremintro} is sharp for the latter inclusion to be strict whenever $g \geq 3$. Indeed, by Proposition~\ref{prop31}, every fine compactified universal Jacobian over $\overline{\M}_{3,1}$ is classical. In  Corollary~\ref{g2comparison} we show that for $g=2$ the latter inclusion is strict also when $n=4,5$. This, combined with Theorem~\ref{theoremintro}, completely answers the related question.
}
    \subsection{Acknowledgements}
	I thank my supervisor Nicola Pagani for the many useful insights and conversations. I would like to acknowledge Rhys Wells for his contribution on the computational side. I am also grateful to Filippo Viviani for all the helpful comments and suggestions on the previous version of the paper.

	\section{Contractible stability conditions for graphs}

	\subsection{Graphs}
	We recall the notation concerning graphs from \cite{MMUV} and \cite{viviani2023new}, which we will adopt throughout the paper.
	
	Let $\Gamma$ be a connected vertex-weighted graph of vertices $V(\Gamma)$ and edges $E(\Gamma)$. Its \emph{total genus} $g(\Gamma)$ is defined as
	\begin{equation*}
		g(\Gamma)=V(\Gamma)-E(\Gamma)+1+\sum_{v\in V(\Gamma)}g(v),
	\end{equation*}
	where $g(v)$ denotes the genus (i.e. weight) of the vertex $v$.
	
	Given a set of vertices $W\subset V(\Gamma)$, we define the \emph{induced subgraph $\Gamma[W]$} to be the graph whose vertex set is $W$ and edge set is given by the edges of $\Gamma$ that connect only vertices in $W$. 
	
	Given a graph $\Gamma$ and a subgraph $\Gamma'\subset\Gamma$, we define its complementary $\Gamma'^{\mathsf{c}}$ to be the induced subgraph $\Gamma[V(\Gamma)\setminus V(\Gamma')]$.
	We will be mainly interested in connected subgraphs whose complementary subgraph is connected too. Such subgraphs are said to be \emph{biconnected}.
	
	Let $\Gamma'\subset\Gamma$ be a marked weighted connected subgraph. We denote the genus of $\Gamma'$ by $g(\Gamma')$ and its set of marked flags by $A(\Gamma')$. Moreover, with a slight abuse of notation, if $\Gamma'=\Gamma[W]$ for some $W\subset V(\Gamma)$, we define $g(\Gamma')=g(W)$ and $A(\Gamma')=A(W)$.
	
	Let $\Gamma',\Gamma''\subsetneq\Gamma$ be disjoint subgraphs. We define the \emph{valence} of $(\Gamma',\Gamma'')$ to be the number
	\begin{equation*}
		\text{val}(\Gamma',\Gamma'')=|E(\Gamma',\Gamma'')|,
	\end{equation*}	
	where $E(\Gamma',\Gamma'')$ denotes the subset of $E(\Gamma)$ whose elements connect a vertex of $\Gamma'$ with one of $\Gamma''$.
	Analogously, we define the \emph{valence} of $\Gamma'$ to be 
	\begin{equation*}
		\text{val}(\Gamma')=|E(\Gamma',\Gamma'^{\mathsf{c}})|.
	\end{equation*}
	As above, we use $\text{val}(W,Z)$ (resp. $\text{val}(W)$) to signify $\text{val}(\Gamma[W],\Gamma[Z])$ (resp. $\text{val}(\Gamma[W])$), in order to lighten the writing. 
	Notice that for a marked graph the notion of valence doesn't take into account marked flags.

	We can now pass to the notions of stable graphs and their category, which will be crucial for the rest of the paper.
	
	\begin{definition}
		 A marked weighted graph is \emph{stable} if, for each vertex $v$,
		 \begin{equation*}
		 	2g(v)-2+\text{val}(v)+n(v)>0,
		 \end{equation*}
		 where $n(v)=|A(v)|$ denotes the cardinality of the set of marked flags emanating from $v$.
		 
		 Let $(g,n)$ be an hyperbolic pair, that is $g,n\in\ZZ_{\geq 0}$ such that $2g-2+n>0$. We define (a skeleton of) the category $\Ggn$, whose objects are a choice of one stable $n$-pointed graph of genus $g$  for every isomorphism classes, and morphisms $c:\Gamma\to\Gamma'$ are contractions of edges of $\Gamma$ followed by an isomorphism.		 
	\end{definition} 

    A morphism of graphs $c:\Gamma\to\Gamma'$ induces a surjective map $c:V(\Gamma)\to V(\Gamma')$ between their sets of vertices.
    
	Particular relevance will be held by \emph{vine graphs}, i.e. graphs with two vertices and no loops (a loop is an edge with its two endpoints coinciding). We denote a vine graph in $\Ggn$ by $V(\text{val}(v_1),g(v_1),A(v_1))$, where $v_1$ is one of the two vertices. Clearly, the two vine graphs $V(e,h,A)$ and $V(e,g+1-e-h,[n]\setminus A)$ are isomorphic, and, when $n=0$, swapping the two vertices of a vine graph $V(g-2h+1,h,\emptyset)$ induces an automorphism.

	\subsection{Universal and c-stability conditions}
	In this section we introduce the notion of universal stability condition on $\Ggn$ and restrict it to a $c$-stability condition on a graph $\Gamma$. We show that the notion is equivalent to the one of $V$-stability condition on the graph $\Gamma$ that is compatible with morphisms of $\Ggn$. Moreover, a classical numerical polarisation compatible with $\Ggn$-morphisms on such a graph always induces a $c$-stability condition, while viceversa is not necessarily true.
	\begin{definition}
		Let $(g,n)\in \ZZ_{\geq 0}^2$ be a hyperbolic pair. The \emph{stability domain of $\Ggn$} is 
		\begin{equation*}
			\D_{g,n}=\{(e,h,A):1\leq e\leq g+1,0\leq h\leq g+1-e, A\subseteq [n]\}\setminus \{(2,0,\emptyset),(2,g-1,[n])\}.
		\end{equation*} 
	\end{definition}
	
	We observe that the triples in $\D_{g,n}$ correspond bijectively to subgraphs (i.e. vertices) of graphs of vine type in $\Ggn$, up to isomorphisms. 
    Explicitly:
	\begin{proposition}
		The map 
		\begin{align*}
			\{(\Gamma,v):\text{$\Gamma$ is a stable vine graph and }v\in V(\Gamma)\}_{/\sim}&\to \D_{g,n}\\
			[(\Gamma,v)]&\mapsto(\textup{val}(v),g(v),A(v))
		\end{align*}
		is a bijection with inverse $(e,h,A)\mapsto [(V(e,h,A),v_1)]$. Here two pairs $(\Gamma,v)$ and $(\Gamma',v')$ are equivalent if there is an isomorphism $\phi:\Gamma\to\Gamma'$ such that $\phi(v)=v'$.
	\end{proposition}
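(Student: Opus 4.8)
The plan is to check directly that the two displayed maps are well defined and mutually inverse, treating the statement as a piece of combinatorial bookkeeping about vine graphs. Write $\Phi$ for the forward map $[(\Gamma,v)]\mapsto(\mathrm{val}(v),g(v),A(v))$ and $\Psi$ for the candidate inverse $(e,h,A)\mapsto[(V(e,h,A),v_1)]$. The argument splits into three checks: (i) $\Phi$ is constant on $\sim$-classes and lands in $\D_{g,n}$; (ii) $\Psi$ produces a genuine \emph{stable} vine graph for every triple in $\D_{g,n}$; (iii) $\Phi\circ\Psi=\mathrm{id}$ and $\Psi\circ\Phi=\mathrm{id}$.

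For (i), if $\phi\colon\Gamma\to\Gamma'$ is an isomorphism with $\phi(v)=v'$, then $\phi$ carries the edges at $v$ bijectively to those at $v'$ and preserves vertex genus and marked flags, so $(\mathrm{val}(v),g(v),A(v))=(\mathrm{val}(v'),g(v'),A(v'))$ and $\Phi$ descends to $\sim$-classes. To see the image lands in $\D_{g,n}$, note that a vine graph has no loops and is connected, so its two vertices are joined by $e:=\mathrm{val}(v)\geq 1$ parallel edges; by the complementarity recorded in the discussion of vine graphs preceding the statement, the second vertex $w$ has genus $g+1-e-g(v)$ and marks $[n]\setminus A(v)$, and $g(v),g(w)\geq 0$ force $0\leq g(v)\leq g+1-e$ and hence $e\leq g+1$. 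Finally, stability of $\Gamma$ forbids a genus-$0$ bivalent unmarked vertex, which is exactly what rules out $(2,0,\emptyset)$ and its vertex-swap $(2,g-1,[n])$; thus $\Phi([(\Gamma,v)])\in\D_{g,n}$.

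For (ii) I would build $V(e,h,A)$ as the loopless two-vertex graph with $e$ parallel edges, setting $(g(v_1),A(v_1))=(h,A)$ and $(g(v_2),A(v_2))=(g+1-e-h,[n]\setminus A)$; the Betti-number computation then returns total genus $g$, and the marks partition $[n]$. The remaining, and most delicate, point is to verify the two stability inequalities $2g(v_i)-2+\mathrm{val}(v_i)+n(v_i)>0$ for every triple of $\D_{g,n}$, with the excised triples $(2,0,\emptyset),(2,g-1,[n])$ appearing as the genus-$0$ bivalent unmarked configurations. Part (iii) is then formal: $v_1$ of $V(e,h,A)$ carries the data $(e,h,A)$, giving $\Phi\circ\Psi=\mathrm{id}$; and since a vine graph is reconstructed up to isomorphism from the triple at a single vertex (the number of edges, that vertex's genus and marks, with the complementary vertex determined by the relation above), $(V(\mathrm{val}(v),g(v),A(v)),v_1)$ is isomorphic to $(\Gamma,v)$ by a map sending $v_1\mapsto v$, which is precisely the relation $\sim$, so $\Psi\circ\Phi=\mathrm{id}$.

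The main obstacle is the stability bookkeeping in step (ii), hand in hand with the vertex-swap symmetry. One must confirm that, within the ranges $1\leq e\leq g+1$, $0\leq h\leq g+1-e$, $A\subseteq[n]$, the triples that destabilise a vertex are accounted for exactly by the prescribed excision, and simultaneously that the involution $(e,h,A)\mapsto(e,g+1-e-h,[n]\setminus A)$ — which interchanges the two vertices and, in the unmarked symmetric case, is realised by a graph automorphism — is compatible with $\sim$, so that $\Phi$ is injective on classes. Everything else is a direct unwinding of the definitions of valence, vertex genus, and marked flags.
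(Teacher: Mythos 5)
Your overall plan (well-definedness of the two maps, then the two composites) is the natural one, and in fact the paper offers no proof of this proposition at all --- it is presented as an immediate observation --- so there is nothing to compare against route-wise. The problem is that your proposal has a genuine gap exactly at the point you yourself flag as ``the most delicate'': you never carry out the step (ii) verification that $V(e,h,A)$ is \emph{stable} for every $(e,h,A)\in\D_{g,n}$, and that verification in fact fails. Take $(e,h,A)=(1,0,\emptyset)$: it satisfies $1\leq e\leq g+1$, $0\leq h\leq g+1-e$, $A\subseteq[n]$, and it is neither $(2,0,\emptyset)$ nor $(2,g-1,[n])$, so it lies in $\D_{g,n}$ for every hyperbolic pair $(g,n)$. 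Yet in $V(1,0,\emptyset)$ the vertex $v_1$ has $2g(v_1)-2+\textup{val}(v_1)+n(v_1)=-1$, so this vine graph is not stable. The same failure occurs for $(1,0,\{i\})$ (where the quantity equals $0$) and for the vertex-swapped triples $(1,g,[n])$ and $(1,g,[n]\setminus\{i\})$. Consequently $\Psi$ is not defined on all of $\D_{g,n}$, and $\Phi$ is not surjective: no stable vine graph realises these triples.

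Correspondingly, the assertion in your step (i) that stability ``exactly'' rules out the bivalent genus-$0$ unmarked configurations $(2,0,\emptyset)$ and $(2,g-1,[n])$ is false: stability also rules out univalent genus-$0$ vertices carrying at most one marking, and these are precisely the triples your deferred check misses. Had you executed your own step (ii), you would have discovered that the proposition, as literally stated with the paper's definition of $\D_{g,n}$, cannot be proved: for the bijection to hold, the stability domain must additionally excise all triples with $e+2h+|A|\leq 2$ or $e+2(g+1-e-h)+n-|A|\leq 2$, i.e.\ the orbits of $(1,0,\emptyset)$ and $(1,0,\{i\})$ under the involution $(e,h,A)\mapsto(e,g+1-e-h,A^{\mathsf{c}})$. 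So the missing step is not routine bookkeeping that can be filled in later; it is the crux, and it can only be closed by amending the definition of $\D_{g,n}$ (or the statement). Note that the discrepancy does not propagate far into the paper, since the domains $\D(\Gamma)$ are defined by requiring an actual contraction $\Gamma\to V(e,h,A)$ in $\Ggn$, which forces $V(e,h,A)$ to be stable and hence automatically avoids these degenerate triples; but a proof of the proposition as stated must confront them, and yours does not.
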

	
	\begin{definition}\label{cstabuniv}
		A \emph{ degree $d$ universal stability condition of type $(g,n)$} is a collection of integers 
		\begin{equation*}
			\m=\{\mEhA:(e,h,A)\in\D_{g,n}\},
		\end{equation*}
		which satisfies the following properties:
		\begin{itemize}
			\item[i.] For each $(e,h,A)\in \D_{g,n}$,
			\begin{equation}\label{prop1}
				\mEhA+\m_{(e,g+1-e-h,A^\mathsf{c})}=d+1-e;
			\end{equation}
			\item[ii.] For each $(e,h,A),(e',h',A'),(e'',h'',A'')\in\D_{g,n}: A=A'\cup A'', A'\cap A''=\emptyset, \text{ } 2(h+1-(h'+h''))=e'+e''-e$, and there exists a triangle whose edges have length $e,e'$ and $e''$ respectively,
			\begin{equation}\label{prop2}
				0\leq \mEhA-h-(\m_{(e',h',A')}-h'+\m_{(e'',h'',A'')}-h'')\leq 1.
			\end{equation}
		\end{itemize}
	\end{definition}
	To make sense of the definition, we observe that the required numerical conditions are related to graphs in $\Ggn$ via the following lemmas:
	\begin{lemma}
		Let $(e',h',A')$ and $(e'',h'',A'')\in \D_{g,n}$. There exists a graph $\Gamma\in\Ggn$ with biconnected and nontrivial subgraphs $\Gamma'$ and $\Gamma''$ with $(\textup{val}(\Gamma'),g(\Gamma'),A(\Gamma'))=(e',h',A')$ and $(\textup{val}(\Gamma''),g(\Gamma''),A(\Gamma''))=(e'',h'',A'')$, such that $\Gamma=\overline{\Gamma'\cup\Gamma''}$ and $\Gamma'\cap\Gamma''=\emptyset$ if and only if $(e'',h'',A'')=(e',g+1-e'-h',A'^{\mathsf{c}})$.
	\end{lemma}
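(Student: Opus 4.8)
The plan is to reduce both implications to a short bookkeeping of the three invariants $(\mathrm{val}, g, A)$, the only substantive ingredient being the behaviour of genus under a two-block decomposition of a graph.

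First I would record the structural consequence of the hypotheses $\Gamma=\overline{\Gamma'\cup\Gamma''}$ and $\Gamma'\cap\Gamma''=\emptyset$: since $\Gamma'$ and $\Gamma''$ are disjoint and between them contain every vertex of $\Gamma$, one has $V(\Gamma'')=V(\Gamma)\setminus V(\Gamma')$, so $\Gamma''$ is exactly the complementary subgraph of $\Gamma'$. The whole lemma thus becomes a comparison between a biconnected subgraph and its complement, and the ``only if'' direction is a direct computation. The valences agree, $e''=e'$, because the cut $E(\Gamma',\Gamma'')$ that defines $\mathrm{val}(\Gamma')$ is literally the same edge set that defines $\mathrm{val}(\Gamma'')$. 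The marked flags of $\Gamma'$ and $\Gamma''$ partition those of $\Gamma$, whence $A'\sqcup A''=[n]$ and $A''=[n]\setminus A'$. For the genus I would combine the edge count $|E(\Gamma)|=|E(\Gamma')|+|E(\Gamma'')|+\mathrm{val}(\Gamma')$ with $|V(\Gamma)|=|V(\Gamma')|+|V(\Gamma'')|$ and the connectedness of both blocks; summing the genus formula for $\Gamma'$ and $\Gamma''$ then gives
\[
g(\Gamma')+g(\Gamma'')=g(\Gamma)+1-\mathrm{val}(\Gamma'),
\]
which rearranges to $h''=g+1-e'-h'$. This single identity is the heart of the argument.

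For the converse I would exhibit the required graph explicitly rather than abstractly: given the complementary relation, take $\Gamma=V(e',h',A')$ and set $\Gamma'=\{v_1\}$, $\Gamma''=\{v_2\}$. By construction $v_1$ carries the invariants $(e',h',A')$, and the genus identity above forces $v_2$ to have genus $g+1-e'-h'=h''$, valence $e'=e''$ and marking set $[n]\setminus A'=A''$, matching the prescribed triple exactly; in a graph on two vertices each vertex is visibly a biconnected nontrivial subgraph, and $\Gamma=\overline{\Gamma'\cup\Gamma''}$ with $\Gamma'\cap\Gamma''=\emptyset$. The one genuine point left to verify is that this vine graph is stable, i.e.\ that it actually lies in $\Ggn$.

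I expect this stability check, rather than the invariant bookkeeping, to be the main obstacle. One must confirm $2h'-2+e'+|A'|>0$ and $2h''-2+e'+|A''|>0$ for every admissible pair, and this is exactly where the precise shape of $\D_{g,n}$ enters: the ranges $1\le e\le g+1$ and $0\le h\le g+1-e$ together with the removal of the exceptional triples $(2,0,\emptyset)$ and $(2,g-1,[n])$ are the data needed to exclude the vertices that would otherwise fail stability. I would therefore close the argument with a short case analysis over small values of $e$, $h$ and $|A|$, verifying that the configurations ruled out by stability are precisely those already removed from $\D_{g,n}$.
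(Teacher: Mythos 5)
Your construction is the same as the paper's: the ``only if'' direction is the invariant bookkeeping that the paper dismisses as trivial (your identity $g(\Gamma')+g(\Gamma'')=g(\Gamma)+1-\mathrm{val}(\Gamma')$ is exactly what makes it trivial), and the ``if'' direction is settled, as in the paper, by exhibiting the vine graph $V(e',h',A')$ with $\Gamma'=\{v_1\}$ and $\Gamma''=\{v_2\}$. You are also right that the only substantive point in that direction is the membership $V(e',h',A')\in\Ggn$, i.e.\ stability; the paper does not run a case analysis for this but implicitly relies on its earlier (unproved) Proposition asserting that $[(\Gamma,v)]\mapsto(\mathrm{val}(v),g(v),A(v))$ is a bijection between stable vine pairs and $\D_{g,n}$.

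The gap is in your closing step. The claim that ``the configurations ruled out by stability are precisely those already removed from $\D_{g,n}$'' is false for $\D_{g,n}$ as defined. Stability of $v_1$ fails exactly when $h'=0$ and $e'+|A'|\le 2$, i.e.\ for the triples $(2,0,\emptyset)$, $(1,0,\emptyset)$ and $(1,0,\{i\})$, and only the first of these is removed. Concretely, $(e',h',A')=(1,0,\emptyset)$ and its complementary triple $(e'',h'',A'')=(1,g,[n])$ satisfy $1\le e\le g+1$ and $0\le h\le g+1-e$ and avoid the two excluded triples, so both lie in $\D_{g,n}$; yet $V(1,0,\emptyset)$ has a genus-$0$ vertex of valence $1$ carrying no markings, hence $2g(v)-2+\mathrm{val}(v)+n(v)=-1$ and the graph is not stable. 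So the case analysis you defer would not close the argument: it would instead reveal that the $e=1$ degenerations are unstable but not excluded. This mismatch is arguably inherited from the paper itself, whose definition of $\D_{g,n}$ and preceding Proposition have the same $e=1$ edge cases; but as a self-contained argument your verification fails as stated, and to complete the proof you would either have to cite that Proposition as the paper effectively does, or note that the lemma is only correct on the subset of triples additionally satisfying $2h-2+e+|A|>0$ and $2(g+1-e-h)-2+e+|[n]\setminus A|>0$.
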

	\begin{proof}
		The "only if" direction is trivial by definition of a biconnected subgraph, while the "if" direction is proven by the existence of the vine graph $V(e',h',A')\in\Ggn$.
	\end{proof}
	\begin{lemma}\label{trianglelemma}
		Let $(e,h,A),(e',h',A')$ and $(e'',h'',A'')\in \D_{g,n}$. There exists a graph $\Gamma\in\Ggn$ with biconnected and nontrivial subgraphs $\Gamma^0,\Gamma'$ and $\Gamma''$ with, respectively,  $(\textup{val}(\Gamma^0),g(\Gamma^0),A(\Gamma^0))=(e,h,A)$, $(\textup{val}(\Gamma'),g(\Gamma'),A(\Gamma'))=(e',h',A')$ and $(\textup{val}(\Gamma''),g(\Gamma''),A(\Gamma''))=(e'',h'',A'')$, such that $\Gamma^0=\overline{\Gamma'\cup\Gamma''}$ and $\Gamma'\cap\Gamma''=\emptyset$ if and only if $A=A'\cup A'', A'\cap A''=\emptyset, \text{ } 2(h+1-(h'+h''))=e'+e''-e$, and there exists a triangle whose edges have length $e,e'$ and $e''$ respectively.
	\end{lemma}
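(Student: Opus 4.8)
The plan is to read off the combinatorics from the way the edges of $\Gamma$ split among the three vertex-classes cut out by $\Gamma'$, $\Gamma''$ and the complement of $\Gamma^0$, and then to prove the converse by an explicit three-vertex construction, exactly parallel to the preceding lemma (there the vine graph $V(e',h',A')$ furnished the realisation).

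For the direction $(\Rightarrow)$, I would assume such a $\Gamma$ exists and set $W'=V(\Gamma')$, $W''=V(\Gamma'')$ and $Z=V((\Gamma^0)^{\mathsf c})$; since $\Gamma^0=\overline{\Gamma'\cup\Gamma''}$ and $\Gamma'\cap\Gamma''=\emptyset$, these give a partition $V(\Gamma)=W'\sqcup W''\sqcup Z$. Writing $a,b,c$ for the numbers of edges joining $W'$--$W''$, $W'$--$Z$ and $W''$--$Z$, the valence bookkeeping reads $e'=a+b$, $e''=a+c$, $e=b+c$, whence $a=\tfrac12(e'+e''-e)$, $b=\tfrac12(e+e'-e'')$ and $c=\tfrac12(e+e''-e')$. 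Nonnegativity of these edge-counts is the triangle inequality, and biconnectedness upgrades it to strictness: connectivity of $\Gamma^0$ forces $a\geq1$, while connectivity of the complements $\Gamma[W''\cup Z]$ and $\Gamma[W'\cup Z]$ (of $\Gamma'$ and $\Gamma''$ respectively) forces $c\geq1$ and $b\geq1$, so $e,e',e''$ form a genuine triangle. The marking condition $A=A'\sqcup A''$ is immediate from $A(\Gamma^0)=A(\Gamma')\sqcup A(\Gamma'')$. Finally, additivity of $|E|-|V|+\sum g(v)$ across the partition, together with the $a$ connecting edges, gives $g(\Gamma^0)=g(\Gamma')+g(\Gamma'')+a-1$, i.e. $h=h'+h''+a-1$; substituting the value of $a$ yields precisely $2(h+1-(h'+h''))=e'+e''-e$.

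For the direction $(\Leftarrow)$, I would construct $\Gamma$ by hand. The genus relation makes $e'+e''-e$ even, so $a,b,c$ above are well-defined integers, and they are positive because the triangle is non-degenerate. Take three vertices $u',u'',w$ of weights $h'$, $h''$, $g+1-e-h$ bearing the marking sets $A'$, $A''$, $[n]\setminus A$, and join them by $a$, $b$, $c$ edges. A short computation (using $a+b+c=\tfrac12(e+e'+e'')$ and $h'+h''=h+1-a$) shows that $\Gamma$ has total genus $g$, that its marking set is $[n]$, and that the induced subgraphs $\Gamma'=\Gamma[\{u'\}]$, $\Gamma''=\Gamma[\{u''\}]$ and $\Gamma^0=\Gamma[\{u',u''\}]$ carry the prescribed valences, genera and markings; since $a,b,c\geq1$ they are connected with connected complements, hence biconnected and nontrivial.

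It remains to check that $\Gamma\in\Ggn$, and this is the point needing care, since a priori the prescribed weights and valences need not define a stable graph. The key observation is that $u'$, $u''$, $w$ are, respectively, exactly a vertex of the stable vine graphs $V(e',h',A')$, $V(e'',h'',A'')$ and $V(e,h,A)$ --- the last being the complementary vertex $(e,g+1-e-h,A^{\mathsf c})$ of $V(e,h,A)$. These vine graphs exist because the three triples lie in $\D_{g,n}$, and stability is a vertexwise condition; hence $2g(v)-2+\textup{val}(v)+n(v)>0$ holds at each of $u'$, $u''$, $w$, so $\Gamma$ is a stable $n$-pointed genus-$g$ graph. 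I expect this stability verification, together with the strict-triangle/biconnectedness dictionary, to be the only genuinely delicate steps, the remaining genus and valence identities being routine.
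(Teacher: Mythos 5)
Your proof is correct and follows essentially the same route as the paper's: the same three-way edge count in the forward direction (your $a,b,c$ are the paper's $k_{12},k_{01},k_{02}$, with positivity tied to connectivity of $\Gamma^0$ and of the complements of $\Gamma'$, $\Gamma''$), and the same three-vertex triangle construction with weights $h'$, $h''$, $g+1-e-h$ and markings $A'$, $A''$, $A^{\mathsf c}$ for the converse. The only difference is that you explicitly verify stability of the constructed graph by matching each vertex with a vertex of the stable vine graphs $V(e',h',A')$, $V(e'',h'',A'')$, $V(e,h,A)$ --- a point the paper's proof leaves implicit --- which is a worthwhile addition rather than a departure.
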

	\begin{proof}
		\begin{itemize}
			\item [$\Rightarrow$] Let $\Gamma\in\Ggn$ with subgraphs $\Gamma^0,\Gamma'$ and $\Gamma''$ as in the statement. Then, $\text{val}(\Gamma',\Gamma'')=h+1-(h'+h'')$, $e'=\text{val}(\Gamma',\Gamma'')+\text{val}(\Gamma',{\Gamma^0}^\mathsf{c})$ and $e''=\text{val}(\Gamma',\Gamma'')+\text{val}(\Gamma'',{\Gamma^0}^\mathsf{c})$, with both $\text{val}(\Gamma',{\Gamma^0}^\mathsf{c})$ and $\text{val}(\Gamma'',{\Gamma^0}^\mathsf{c})$ different from $0$ and from $\text{val}(\Gamma^0,{\Gamma^0}^\mathsf{c})$, since $\Gamma'$ and $\Gamma''$ are biconnected and nontrivial. As $e=\text{val}(\Gamma^0,{\Gamma^0}^\mathsf{c})=\text{val}(\Gamma',{\Gamma^0}^\mathsf{c})+\text{val}(\Gamma'',{\Gamma^0}^\mathsf{c})$, we have
                \begin{equation*}
                    2(h+1-(h'+h''))=e'+e''-e.
                \end{equation*} 
			Moreover, 
			\begin{equation*}
				e=\text{val}(\Gamma',{\Gamma^0}^\mathsf{c})+\text{val}(\Gamma'',{\Gamma^0}^\mathsf{c})<\text{val}(\Gamma',{\Gamma^0}^\mathsf{c})+\text{val}(\Gamma'',{\Gamma^0}^\mathsf{c})+2\text{val}(\Gamma',\Gamma'')=e'+e'',
			\end{equation*}
			iff $\text{val}(\Gamma',\Gamma'')\neq 0$, that is, if and only if $\Gamma^0$ is connected, and
			\begin{equation*}
				e'=\text{val}(\Gamma',\Gamma'')+\text{val}(\Gamma',{\Gamma^0}^\mathsf{c})<\text{val}(\Gamma',\Gamma'')+\text{val}(\Gamma'',{\Gamma^0}^\mathsf{c})+\text{val}(\Gamma^0,{\Gamma^0}^\mathsf{c})=e''+e
			\end{equation*}
			iff $\text{val}(\Gamma'',{\Gamma^0}^\mathsf{c})> 0$, that is, if and only if $\Gamma'$ is nontrivial and its complementary is connected. The analogous statement is true for $e''$. Therefore, since the conditions on the sets $A,A',A''$ are trivially satisfied, we conclude that this implication holds.
			\item[$\Leftarrow$] Define $k_{12}:=\frac{e'+e''-e}{2}, k_{01}:=\frac{e+e'-e''}{2}$ and $k_{02}:=\frac{e+e''-e'}{2}$. Since $2(h+1-(h'+h''))=e'+e''-e$, and $h'+h''\leq h$, we deduce that $k_{12} \in \ZZ_{\geq1}$. Hence, by the assumption of the existence of a triangle with edges of length $e,e',e''$, $k_{01},k_{02}\in\ZZ_{\geq1}$ too, and $e=k_{01}+k_{02},e'=k_{01}+k_{12}$ and $e''=k_{02}+k_{12}$. Therefore, the graph of Figure \ref{triangle diagram}, where $h^\mathsf{c}=g+1-e-h$, and the numbers $k_{ij}$ denote the number of edges, proves our claim.
			\begin{figure}[ht!]
				\caption{}
				\label{triangle diagram}
				\centering
				\begin{tikzpicture}
					\node (h0c) [shape=circle,draw] {$h^\mathsf{c}$};
					\node (Ac)[above=.3cm of h0c] {$A^\mathsf{c}$};
					\node [shape=circle,draw,below left=2cm and 1cm of h0c] (h1){$h'$};
					\node (A1)[below left=.3cm of h1] {$A'$};
					\node [shape=circle,draw,below right=2cm and 1cm of h0c] (h2){$h''$};
					\node (A2)[below right=.3cm of h2] {$A''$};

					\path[-,draw]
					(h1) edge[looseness=0.8, out=35, in=145] node[name=l11] {}(h2)
					(h1) edge[looseness=0.8, out=-35, in=-145] node[name=l12] {} (h2);
					\path[-,draw]
					(h1) edge[looseness=0.8, out=45, in=-105] node[name=l01] {}(h0c)
					(h1) edge[looseness=0.8, out=100, in=190] node[name=l02] {} (h0c);
					\path[-,draw]
					(h2) edge[looseness=0.8, out=80, in=-10] node[name=l21] {}(h0c)
					(h2) edge[looseness=0.8, out=135, in=-75] node[name=l22] {} (h0c);
					
					\draw (l11) to node[right]{$k_{12}$}(l12)[dashed] ;
					\draw (l21) to node[above]{$k_{02}$}(l22)[dashed];
					\draw (l01) to node[above]{$k_{01}$}(l02)[dashed];
					
					\draw (h0c) to (Ac);
					\draw (h1) to (A1);
					\draw (h2) to (A2);

				\end{tikzpicture}
			\end{figure}
		\end{itemize}
		
	\end{proof}

	\begin{definition}
		Let $\Gamma\in\Ggn$ be a graph. We define the \emph{stability domain of $\Gamma$} as 
		\begin{equation*}
			\D(\Gamma)=\{(e,h,A)\in\D_{g,n}:\exists\textup{ }\emptyset\subsetneq\Gamma'\subsetneq\Gamma, c:\Gamma\to V(e,h,A) \textup{ and }c^{-1}(v_1)=\Gamma'\}.
		\end{equation*}
	\end{definition}
	
	Clearly, such a morphism $c:\Gamma\to V(e,h,A)$ exists if and only if the subgraph $\Gamma'$ is biconnected, nontrivial and such that $(\text{val}(\Gamma'),g(\Gamma'),A(\Gamma'))=(e,h,A)$
	\begin{definition}\label{cstabGamma}
		Let $\Gamma\in\Ggn$ be a graph. A \emph{degree $d$ cone-compatible stability condition on $\Gamma$} (\emph{$c$-stability condition} for short) is a collection of integers 
		\begin{equation*}
			\m^\Gamma=\{\mEhA^\Gamma:(e,h,A)\in\D(\Gamma)\}
		\end{equation*}
		satisfying the following properties:
		\begin{itemize}
			\item [i.] $\forall (e,h,A)\in\D(\Gamma)$,
			\begin{equation}\label{prop1Gamma}
				\mEhA^\Gamma+\m^\Gamma_{(e,g+1-e-h,A^\mathsf{c})}=d+1-e;
			\end{equation}
			\item[ii.] $\forall \text{ } (e,h,A),(e',h',A'),(e'',h'',A'')\in\D(\Gamma): A=A'\cup A'', A'\cap A''=\emptyset, \text{ } 2(h+1-(h'+h''))=e'+e''-e$, and there exists a triangle whose edges have length $e,e'$ and $e''$ respectively,
			\begin{equation}\label{prop2Gamma}
				0\leq \mEhA^\Gamma-h-(\m^\Gamma_{(e',h',A')}-h'+\m^\Gamma_{(e'',h'',A'')}-h'')\leq 1.
			\end{equation}
		\end{itemize}		
	\end{definition}
	
	Let $\{\m^\Gamma\}_{\Gamma\in\Ggn}$ be an assignment of $c$-stability conditions that agree on common domains, i.e. such that, for each $\Gamma,\Gamma'\in\Ggn$ and $(e,h,A)\in\D(\Gamma)\cap\D(\Gamma')$, $\mEhA^\Gamma=\mEhA^{\Gamma'}$. Taking the limit over $\Ggn$ gives a universal stability condition $\m$, which coincides with $\m^\Gamma$ on each $\Gamma\in\Ggn$. That is, the following holds
	\begin{lemma}\label{gluelemma}
		
		The map 
		\begin{align*}
			\{\textup{Universal }\textup{stability conditions of type $(g,n)$}\}&\to\begin{Bmatrix}
				\{c\textup{-stability conditions on $\Gamma$}\}_{\Gamma\in\Ggn}\\ \textup{that agree on common domains}
			\end{Bmatrix}\\
			\m&\mapsto\{\m_{|_{\D(\Gamma)}}\}_{\Gamma\in\Ggn}
		\end{align*}
		is a bijection with inverse 
		\begin{align*}
			\begin{Bmatrix}
				\{c\textup{-stability conditions on $\Gamma$}\}_{\Gamma\in\Ggn}\\ \textup{that agree on common domains}
			\end{Bmatrix}&\to\{\textup{Universal }\textup{stability conditions of type $(g,n)$}\}\\
			\{\m^\Gamma\}_{\Gamma\in\Ggn}&\mapsto \varprojlim_{\Ggn}\m^\Gamma,
		\end{align*}
		where $(\varprojlim_{\Ggn}\m^\Gamma)_{(e,h,A)}=\mEhA^\Gamma$, for any graph $\Gamma$ such that $(e,h,A)\in\D(\Gamma)$.
	\end{lemma}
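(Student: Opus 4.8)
The plan is to exploit the fact that conditions (i) and (ii) in Definitions~\ref{cstabuniv} and~\ref{cstabGamma} are \emph{verbatim the same}: the only difference between a universal stability condition and a $c$-stability condition on $\Gamma$ is the index set over which the collection ranges, namely $\D_{g,n}$ versus $\D(\Gamma)$. The key structural input is that $\D_{g,n}=\bigcup_{\Gamma\in\Ggn}\D(\Gamma)$, since by the Proposition preceding Definition~\ref{cstabuniv} every triple $(e,h,A)\in\D_{g,n}$ lies in $\D(V(e,h,A))$, and, more importantly, each individual instance of axiom (i) or (ii) only involves triples that can be realised \emph{simultaneously} inside a single graph of $\Ggn$. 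Once this is in place, both maps and the inverse relations follow by unwinding the definitions.

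First I would check that the forward map is well defined. Given a universal stability condition $\m$ and a graph $\Gamma$, the restriction $\m_{|_{\D(\Gamma)}}$ is a collection indexed by $\D(\Gamma)$, and conditions~\eqref{prop1Gamma} and~\eqref{prop2Gamma} are simply the instances of~\eqref{prop1} and~\eqref{prop2} for triples lying in $\D(\Gamma)\subseteq\D_{g,n}$. Note that if $(e,h,A)\in\D(\Gamma)$ then so is its complement $(e,g+1-e-h,A^{\mathsf{c}})$, because the complementary subgraph of a biconnected nontrivial subgraph is again biconnected and nontrivial, with the invariants dictated by~\eqref{prop1Gamma}; hence these axioms hold automatically. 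That the family $\{\m_{|_{\D(\Gamma)}}\}_{\Gamma}$ agrees on common domains is immediate, since on $\D(\Gamma)\cap\D(\Gamma')$ both restrictions equal $\m_{(e,h,A)}$.

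The substance lies in the inverse map. Given a compatible family $\{\m^\Gamma\}$, I would define $\m=\varprojlim_{\Ggn}\m^\Gamma$ pointwise by setting $\m_{(e,h,A)}:=\m^\Gamma_{(e,h,A)}$ for any $\Gamma$ with $(e,h,A)\in\D(\Gamma)$; such a $\Gamma$ exists (e.g.\ $\Gamma=V(e,h,A)$), and the value is independent of the choice precisely by the agreement hypothesis. To verify~\eqref{prop1} for $\m$, observe that both $(e,h,A)$ and $(e,g+1-e-h,A^{\mathsf{c}})$ lie in $\D(V(e,h,A))$, being the two vertices of the vine graph, so~\eqref{prop1Gamma} for $\m^{V(e,h,A)}$ gives the identity. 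To verify~\eqref{prop2}, given triples $(e,h,A),(e',h',A'),(e'',h'',A'')$ satisfying the stated numerical and triangle conditions, Lemma~\ref{trianglelemma} produces a \emph{single} graph $\Gamma\in\Ggn$ with biconnected nontrivial subgraphs realising all three triples; hence all three lie in $\D(\Gamma)$, and by agreement $\m_{(e,h,A)}=\m^\Gamma_{(e,h,A)}$ (and likewise for the other two), so~\eqref{prop2Gamma} for $\m^\Gamma$ yields~\eqref{prop2} for $\m$.

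Finally, the two maps are mutually inverse by direct inspection: starting from $\m$, restricting and then taking the limit returns at each triple the value $\m_{(e,h,A)}$; starting from $\{\m^\Gamma\}$, taking the limit and then restricting to $\D(\Gamma)$ returns $\m^\Gamma$, again using agreement on common domains. The main obstacle, and the only place where genuine content enters, is the verification of~\eqref{prop2} for the glued condition: one must exhibit a single graph whose domain contains all three triples appearing in~\eqref{prop2} at once, for otherwise the second axiom of no individual $c$-stability condition could be invoked. This is exactly what Lemma~\ref{trianglelemma} supplies, while the well-definedness of the limit rests on the agreement-on-common-domains hypothesis.
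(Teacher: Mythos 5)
Your proposal is correct and follows essentially the same route as the paper: the paper's proof likewise defines the limit pointwise and uses the agreement-on-common-domains hypothesis for well-definedness, then dismisses the remaining verifications as trivial. Your write-up simply fills in those details explicitly — in particular, correctly identifying that Lemma~\ref{trianglelemma} (and the vine graph $V(e,h,A)$ for axiom~(i)) is what guarantees every instance of axiom~(ii) is witnessed inside a single $\D(\Gamma)$ — which is exactly the implicit content behind the paper's ``everything else is trivial.''
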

	\begin{proof}
		Let $\{\m^\Gamma\}_{\Gamma\in\Ggn}$ be in the domain of the latter map.
		Let $(e,h,A)\in\D_{g,n}$, and let $\m^\Gamma$ be a $c$-stability condition such that $(e,h,A)\in\D(\Gamma)$. For each other $\Gamma'$ such that $(e,h,A)\in\D(\Gamma')$, one must have $\mEhA^\Gamma=\mEhA^{\Gamma'}$, therefore $(\varprojlim_{\Ggn}\m^{\Gamma'})_{(e,h,A)}:=\mEhA^\Gamma$ is well defined. Everything else is as trivial.		 
	\end{proof}
	
	We, now, want to show what is the relation between $V$-stability conditions, as defined in \cite[Definition~1.4]{viviani2023new}, and $c$-stability conditions on a given graph $\Gamma\in\Ggn$.
	
	\begin{lemma}
		Let $\C(\Gamma)=\{Z\subset V(\Gamma):\emptyset\subsetneq\Gamma(Z)\subsetneq\Gamma \text{ is biconnected}\}$. Then the map
		\begin{align*}
			\alpha:\C(\Gamma)&\to \D(\Gamma)\\
			Z&\mapsto (\textup{val}(Z),g(Z),A(Z))
		\end{align*}
		is surjective.
	\end{lemma}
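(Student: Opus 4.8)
The plan is to produce, for each target triple $(e,h,A)\in\D(\Gamma)$, an explicit preimage under $\alpha$, namely the set of vertices contracted to $v_1$ by a witnessing morphism. First I would unwind the definition of $\D(\Gamma)$: a triple $(e,h,A)$ lies in $\D(\Gamma)$ precisely when there is a contraction $c:\Gamma\to V(e,h,A)$ with $c^{-1}(v_1)=\Gamma'$ for some $\emptyset\subsetneq\Gamma'\subsetneq\Gamma$, and by the remark following the definition of $\D(\Gamma)$ this $\Gamma'$ is biconnected, nontrivial, and satisfies $(\textup{val}(\Gamma'),g(\Gamma'),A(\Gamma'))=(e,h,A)$. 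The candidate preimage is then $Z:=V(\Gamma')=c^{-1}(v_1)\subseteq V(\Gamma)$, viewed as a vertex set.

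The key step, and the only point requiring an argument, is to check that the induced subgraph $\Gamma(Z)$ coincides with the subgraph $\Gamma'$. The inclusion of $\Gamma'$ into $\Gamma(Z)$ is immediate, since every vertex and edge of $\Gamma'$ lies over $Z$. For the reverse inclusion I would exploit that the target $V(e,h,A)$ is a vine graph and hence has no loops: any edge of $\Gamma$ with both endpoints in $Z=c^{-1}(v_1)$ cannot survive the contraction, for otherwise it would descend to a loop at $v_1$; such an edge must therefore be among those contracted by $c$, i.e. it already belongs to $\Gamma'$. Hence $\Gamma(Z)=\Gamma'$, so in particular $\Gamma'$ is an induced subgraph.

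With this identification in hand, $\Gamma(Z)$ is biconnected and satisfies $\emptyset\subsetneq\Gamma(Z)\subsetneq\Gamma$, so $Z\in\C(\Gamma)$, and
\[
\alpha(Z)=(\textup{val}(Z),g(Z),A(Z))=(\textup{val}(\Gamma'),g(\Gamma'),A(\Gamma'))=(e,h,A).
\]
Since $(e,h,A)\in\D(\Gamma)$ was arbitrary, $\alpha$ is surjective. I expect the only genuine subtlety to be the loop-free observation above: without it one cannot conclude that the witnessing subgraph $\Gamma'$ is induced, and induced-ness is exactly what is needed to land in $\C(\Gamma)$, whose elements are vertex subsets recorded through their induced subgraphs. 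Everything else is a direct unwinding of the definitions of $\D(\Gamma)$, $\C(\Gamma)$, and the invariants $\textup{val}$, $g$, $A$.
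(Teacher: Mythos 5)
Your proof is correct and follows essentially the same route as the paper: take the subgraph $\Gamma'=c^{-1}(v_1)$ furnished by the definition of $\D(\Gamma)$ and observe that $Z=V(\Gamma')$ is a preimage of $(e,h,A)$ under $\alpha$. The only difference is that you explicitly verify $\Gamma(Z)=\Gamma'$ using the fact that vine graphs have no loops, a point the paper treats as immediate; this is a reasonable detail to spell out, since membership in $\C(\Gamma)$ is phrased in terms of the induced subgraph.
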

	\begin{proof}
		By definition of $\D(\Gamma)$, for each $(e,h,A)\in\D(\Gamma)$, there exists a nontrivial and biconnected subgraph $\Gamma'\subsetneq\Gamma$ such that $(\text{val}(\Gamma'),g(\Gamma'),A(\Gamma'))=(e,h,A)$. Thus,  $Z=V(\Gamma')\in \alpha^{-1}(e,h,A)$.
	\end{proof}
	
	We recall that morphisms in $\Ggn$ are contractions of prescribed edges followed by an isomorphism, thus they induce surjections on the level of subsets of vertices. In particular, if $c:\Gamma\to\Gamma'$ is a morphism, for each $Z\subset V(\Gamma')$, there exists a maximal $W\subset V(\Gamma)$ such that $c(W)=Z$. This fact ensures the following definition is well posed. 
	
	\begin{definition}\label{compatiblewithmorphs}
		Let $\Gamma\in\Ggn$ be a graph and $\n^\Gamma$ an assignment of integers 
		\begin{equation*}
			\n^\Gamma=\{\n_Z^\Gamma:Z\in\C(\Gamma)\}.
		\end{equation*}
		We say that $\n^\Gamma$ is \emph{compatible with $\Ggn$-morphisms} if for each $\Gamma'\in\Ggn$ and every two morphisms $c:\Gamma\to\Gamma'$ and $\tilde{c}:\Gamma\to\Gamma'$ in $\Ggn$, it satisfies
		\begin{equation*}
			\n^\Gamma_{c^{-1}(Z)}=\n^\Gamma_{\tilde{c}^{-1}(Z)},
		\end{equation*}
		for each $Z\in\C(\Gamma')$.
	\end{definition}

	With this at hand, we can finally show that $c$-stability conditions are precisely $V$-stability conditions that satisfy compatibility with $\Ggn$-morphisms.
	
	\begin{proposition}\label{cstab=vstab}
		Let $\Gamma\in\Ggn$ be a graph. The map
		\begin{align}
			\beta:\{\textup{$c$-stabilities on $\Gamma$}\}&\to \{\textup{$V$-stabilities on $\Gamma$ compatible with $\Ggn$-morphisms}\}\\
			\m^\Gamma&\mapsto\n^\Gamma:=\{\nZ^\Gamma:=\m^\Gamma_{(\textup{val}(Z),g(Z),A(Z))}:Z\in\C(\Gamma)\}\notag
		\end{align}
		is a bijection with inverse
		\begin{equation}
			\gamma:\n^\Gamma\mapsto\m^\Gamma:=\{\mEhA^\Gamma:=\n^\Gamma_{Z(e,h,A)}:(e,h,A)\in\D(\Gamma)\},
		\end{equation}
		where $Z(e,h,A)\in \alpha^{-1}(e,h,A)$.
	\end{proposition}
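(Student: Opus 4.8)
The plan is to view $\beta$ as pullback along the surjection $\alpha\colon\C(\Gamma)\to\D(\Gamma)$ and $\gamma$ as the corresponding descent, and to show that the compatibility hypothesis is precisely what makes this descent legitimate. The first and most essential step is to check that $\gamma$ is well defined, i.e.\ that $\n^\Gamma_{Z(e,h,A)}$ does not depend on the chosen representative $Z(e,h,A)\in\alpha^{-1}(e,h,A)$; this is the only point at which compatibility with $\Ggn$-morphisms is used. Given two biconnected subsets $W,W'\in\alpha^{-1}(e,h,A)$, contracting all edges inside $\Gamma(W)$ and inside its (connected) complement yields a morphism $c\colon\Gamma\to V(e,h,A)$ with $c^{-1}(v_1)=W$, and likewise a morphism $\tilde c$ with $\tilde c^{-1}(v_1)=W'$; both lie in $\Ggn$ because $W,W'$ are biconnected, and both target the same object $V(e,h,A)$ since $W,W'$ have the same type. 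Since $\{v_1\}\in\C(V(e,h,A))$, compatibility applied to $c,\tilde c$ gives $\n^\Gamma_W=\n^\Gamma_{c^{-1}(v_1)}=\n^\Gamma_{\tilde c^{-1}(v_1)}=\n^\Gamma_{W'}$. Equivalently, a compatible $\n^\Gamma$ factors uniquely as $\n^\Gamma=\m^\Gamma\circ\alpha$.

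Granting well-definedness, the two composites are formal. One has $\gamma\circ\beta=\mathrm{id}$ because $\alpha(Z(e,h,A))=(e,h,A)$, and $\beta\circ\gamma=\mathrm{id}$ because the factorization $\n^\Gamma=\m^\Gamma\circ\alpha$ says exactly that $\n^\Gamma_Z=\m^\Gamma_{\alpha(Z)}$ for every $Z\in\C(\Gamma)$. What then remains is that $\beta$ and $\gamma$ actually take values in the asserted target sets.

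For this I would match the two defining conditions termwise through $\alpha$. That $\beta(\m^\Gamma)$ is compatible is automatic: contractions preserve valence, genus, and marked flags, so $\alpha(c^{-1}(Z))$ is independent of the morphism $c$, whence $\n^\Gamma_{c^{-1}(Z)}=\m^\Gamma_{\alpha(c^{-1}(Z))}$ is too, which is the compatibility condition. For condition~(i) the relevant identity is $\alpha(Z^{\mathsf c})=(e,g+1-e-h,A^{\mathsf c})$ when $\alpha(Z)=(e,h,A)$ (the valence is unchanged while genus and markings pass to the complement), so \eqref{prop1Gamma} is exactly the $\alpha$-translate of the complementation axiom for $\n^\Gamma$. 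For condition~(ii), Lemma~\ref{trianglelemma} identifies the triples of types satisfying the numerical constraints of \eqref{prop2Gamma} with the triples $(\Gamma^0,\Gamma',\Gamma'')$ realised by biconnected subgraphs with $\Gamma^0=\overline{\Gamma'\cup\Gamma''}$, so the inequality \eqref{prop2Gamma} corresponds to the $V$-stability inequality under $\alpha$.

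The step I expect to require the most care is matching condition~(ii) in the $\gamma$ direction. The $c$-stability axiom quantifies over \emph{all} numerically admissible triples of types in $\D(\Gamma)$, whereas the $V$-stability inequality for $\n^\Gamma$ is a priori available only for the corresponding configurations of biconnected subgraphs; one must check, using the surjectivity of $\alpha$ together with Lemma~\ref{trianglelemma}, that these two quantifier ranges coincide under $\alpha$, so that no admissible triple is missed and the inequality transports in both directions. The already-established well-definedness then guarantees that the transported bound is independent of the chosen lifts, and carrying out this termwise comparison of the two axiom systems completes the proof that $\beta$ and $\gamma$ are mutually inverse bijections.
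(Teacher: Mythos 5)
Your proposal is correct and follows essentially the same route as the paper's proof: well-definedness of $\gamma$ via compatibility applied to the two contractions $c,\tilde c\colon\Gamma\to V(e,h,A)$ with $Z=\{v_1\}$, the formal verification that the composites are the identity, and the termwise transport of axioms (i) and (ii) through $\alpha$ using Lemma~\ref{trianglelemma}. The step you flag as delicate — realising numerically admissible triples in $\D(\Gamma)$ by disjoint biconnected subgraphs of the fixed $\Gamma$ so that the $V$-stability inequality applies — is handled in the paper exactly as you propose, by invoking Lemma~\ref{trianglelemma} to choose $\Gamma',\Gamma''$ disjoint with $\overline{\Gamma'\cup\Gamma''}=\Gamma^0$.
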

	\begin{proof}
		First, we show that for each degree $d$ $c$-stability condition $\m^\Gamma$, $\beta(\m^\Gamma)$ is indeed a $V$-stability condition of degree $d$ on $\Gamma$ compatible with $\Ggn$-morphisms. As the compatibility requirement is trivially satisfied by definition, we only need to prove that $\beta(\m^\Gamma)=\{\m^\Gamma_{(\textup{val}(Z),g(Z),A(Z))}:Z\in\C(\Gamma)\}$ satisfies Properties~(1) and~(2) of \cite[Definition~1.4]{viviani2023new}.
		\begin{itemize}
			\item[(1)] For each $Z\in \C(\Gamma)$, there exists a contraction $c:\Gamma\to V(\textup{val}(Z),g(Z),A(Z))$ such that $\Gamma(Z)=c^{-1}(v_1)$ and $\Gamma(Z^\mathsf{c})=c^{-1}(v_2)$, where $v_2$ denotes the remaining vertex of the vine graph. Therefore the triples $(\textup{val}(Z),g(Z),A(Z))$ and $(\textup{val}(Z^\mathsf{c}),h(Z^\mathsf{c}),A(Z^\mathsf{c}))$ are elements of $\D(\Gamma)$ that satisfy 
			\begin{equation*}
				\textup{val}(Z^\mathsf{c})=\textup{val}(Z),\qquad g(Z^\mathsf{c})=g+1-e-g(Z),\qquad A(Z^\mathsf{c})=A(Z)^\mathsf{c}.
			\end{equation*}
			Thus property $(1)$ is satisfied, since
			\begin{equation*}
				\beta(\m^\Gamma)_Z+\beta(\m^\Gamma)_{Z^\mathsf{c}}=\m^\Gamma_{(\textup{val}(Z),g(Z),A(Z))}+\m^\Gamma_{(\textup{val}(Z),g+1-e-g(Z),A(Z)^\mathsf{c})}=d+1-\textup{val}(Z),
			\end{equation*}
			where the last equality is given by \eqref{prop1Gamma}.
			\item[(2)] Let $Z,Z',Z''\in\C(\Gamma)$ such that $Z'\cap Z''=\emptyset$ and $Z'\cup Z''=Z$. By the biconnectedness hypotesis, $\alpha(Z),\alpha(Z'),\alpha(Z'')\in\D(\Gamma)$ and satisfy 
			\begin{align*}
				\textup{val}(Z)&=\textup{val}(Z')+\textup{val}(Z'')-2\text{val}(Z',Z''),\\	g(Z)&=g(Z')+g(Z'')+\text{val}(Z',Z'')-1,\\
				A(Z)&=A(Z')\cup A(Z'') \text{ and }A(Z')\cap A(Z'')=\emptyset.
			\end{align*}
			Hence $2(g(Z)+1-(g(Z')+g(Z'')))=\textup{val}(Z')+\textup{val}(Z'')-\textup{val}(Z)$, there exists a triangle with edge lengths $e,e',e''$, by the proof of Lemma \ref{trianglelemma}, and, by \eqref{prop2Gamma}, 
			\begin{multline*}
				\beta(\m^\Gamma)_Z-\beta(\m^\Gamma)_{Z'}-\beta(\m^\Gamma)_{Z''}-\text{val}(Z',Z'')=\\=\m^\Gamma_{(\textup{val}(Z),g(Z),A(Z))}-\m^\Gamma_{(\textup{val}(Z'),g(Z'),A(Z'))}-\m^\Gamma_{(\textup{val}(Z''),g(Z''),A(Z''))}-g(Z)+g(Z')+g(Z'')-1\in \{-1,0\}.
			\end{multline*}
		\end{itemize}
		Therefore the map $\beta$ is well defined.
		
		To show the same for $\gamma$, we, first, observe that if $Z,W\in \C^d(\Gamma)$ are such that $\alpha(Z)=\alpha(W)$ and $c_Z:\Gamma\to V(\textup{val}(Z),g(Z),A(Z)),$ $c_W:\Gamma\to V(\textup{val}(W),g(W),A(W))$ are the respective contractions, then $c_Z(Z)= c_W(W)$, and, by compatibility with $\Ggn$-morphisms $\nZ^\Gamma=\n^\Gamma_{c_Z^{-1}(c_Z(Z))}=\n^\Gamma_{c_W^{-1}(c_W(W))}=\n^\Gamma_W$. Therefore, $\gamma$ is a well defined function to $\ZZ^{|\D(\Gamma)|}$.
		Now we check that for each degree $d$ $V$-stability condition compatible with $\Ggn$-morphisms $\n^\Gamma$, 
        $\gamma(\n^\Gamma)$ is a $c$-stability condition of the same degree, i.e. it satisfies the requirements of Definition~\ref{cstabGamma}.
		\begin{itemize}
			\item[i.] Let $(e,h,A)\in \D(\Gamma)$ and $Z\in \alpha^{-1}(e,h,A)$. We notice that $Z^\mathsf{c}\in \alpha^{-1}(e,g+1-h-e,A^\mathsf{c})$. Thus property i. is satisfied, since
			\begin{equation*}
				\gamma(\n^\Gamma)_{(e,h,A)}+\gamma(\n^\Gamma)_{(e,g+1-h-e,A^\mathsf{c})}=\nZ^\Gamma+\n^\Gamma_{Z^\mathsf{c}}=d+1-\text{val}(Z,Z^\mathsf{c})=d+1-e.
			\end{equation*}
			\item[ii.] Let $(e,h,A),(e',h',A'),(e'',h'',A'')\in\D(\Gamma)$ such that $ A=A'\cup A'', A'\cap A''=\emptyset$, $ 2(h+1-(h'+h''))=e'+e''-e$ and there exists a triangle whose edges have length $e,e'$ and $e''$ respectively. By definition of $\D(\Gamma)$, there exist $\Gamma^0,\Gamma',\Gamma''\subsetneq\Gamma$ such that $(\textup{val}(\Gamma^0),h(\Gamma^0),A(\Gamma^0))=~(e,h,A)$, $(\textup{val}(\Gamma'),h(\Gamma'),A(\Gamma'))=(e',h',A')$ and $(\textup{val}(\Gamma''),h(\Gamma''),A(\Gamma''))=(e'',h'',A'')$, and, by choosing them as in Lemma~\ref{trianglelemma}, we can assume $\Gamma'\cap\Gamma''=\emptyset$ and $\overline{\Gamma'\cup\Gamma''}=\Gamma^0$.
			Then, $Z=V(\Gamma^0),Z'=V(\Gamma')$ and $Z''=V(\Gamma'')$, satisfy $Z\in\alpha^{-1}(e,h,A),$ $Z'\in\alpha^{-1}(e',h',A')$ and $Z''\in\alpha^{-1}(e'',h'',A'')$. Thus property ii. holds as
			\begin{multline*}
				\gamma(\n^\Gamma)_{(e,h,A)}-\gamma(\n^\Gamma)_{(e',h',A')}-\gamma(\n^\Gamma)_{(e'',h'',A'')}-h+h'+h''=\\=\n^\Gamma_Z-\n^\Gamma_{Z'}-\n^\Gamma_{Z''}-\text{val}(Z',Z'')+1\in \{0,1\}.
			\end{multline*}
		\end{itemize}
		
		Finally, since both $\beta$ and $\gamma$ are well defined, we just need to show that they are indeed mutual inverses:
		\begin{equation*}
			\n^\Gamma\xmapsto{\gamma}\{\mEhA^\Gamma:=\n^\Gamma_{Z(e,h,A)}\}\xmapsto{\beta}\{\tilde{\n}^\Gamma_Z:=\m^\Gamma_{(\textup{val}(Z),g(Z),A(Z))}\}=\n^\Gamma,
		\end{equation*}
		since, for each $W\in\alpha^{-1}(\textup{val}(Z),g(Z),A(Z))$, $\n^\Gamma_W=\nZ^\Gamma$.\\
		Viceversa, 
		\begin{equation*}
			\m^\Gamma\xmapsto{\beta}\{\nZ^\Gamma:=\m^\Gamma_{(\textup{val}(Z),g(Z),A(Z))}\}\xmapsto{\gamma}\{\tilde{\m}^\Gamma_{(e,h,A)}:=\n^\Gamma_{Z(e,h,A)}\}=\m^\Gamma,
		\end{equation*}
		since $(\textup{val}(Z(e,h,A)),g(Z(e,h,A)),A(Z(e,h,A)))=(e,h,A)$, for each $(e,h,A)\in\D(\Gamma)$.
	\end{proof}
	
	Therefore we notice that $c$-stability conditions are just $V$-stability conditions that satisfy the extra requirement of being compatible with $\Ggn$-morphisms. This means that they are compatible with respect to all the morphisms $c:\Gamma\to\Gamma'$, whose images are graphs in the cone over $\Gamma$ in $\Ggn$, hence the name of "cone-compatible" stability conditions.
	
	\begin{remark}
		Notice that, in general, given a graph $\Gamma\in\Ggn$, not all the $V$-stability conditions on $\Gamma$ are necessarily compatible with $\Ggn$-morphisms, as shown by \cite[Example~1.27]{viviani2023new}. Indeed, let $\Gamma$ be the graph in loc. cit. decorated with markings $\{1\}$ and $\{2\}$ at the vertices $5$ and $6$ respectively, and let $\n^\Gamma$ be the $V$-stability condition associated to the mildly superadditive function $\beta$. Notice that $\Gamma$ is isomorphic to a graph in $G_{3,2}$. Consider the morphisms $c:\Gamma\to V(3,0,\{2\})$, that contracts edges $\{\overline{16},\overline{13},\overline{24},\overline{45}\}$ and $\tilde{c}:\Gamma\to V(3,0,\{2\})$, that contracts edges $\{\overline{16},\overline{12},\overline{35},\overline{45}\}$, in the notation of loc. cit. Then, we observe that 
		\begin{equation*}
			0=\n^\Gamma_{c^{-1}(\{v_1\})}\neq\n^\Gamma_{\tilde{c}^{-1}(\{v_1\})}=1.
		\end{equation*}
	\end{remark}
	
	As previously announced, we now show how $\Ggn$-morphisms compatible numerical polarisations induce $c$-stabilities, on a given graph $\Gamma$ as well as in the universal case. 
	Recalling the language of \cite{Oda1979CompactificationsOT}, we give the following:
	
	\begin{definition}\label{fclassicalcj}
		A \emph{degree $d$ numerical polarisation} $\phi^\Gamma$ on a graph $\Gamma\in\Ggn$ is a real divisor $\phi^\Gamma\in\textup{Div}(\Gamma)\otimes\RR=\RR^{V(\Gamma)}$ of total degree $|\phi^\Gamma |=\sum_{v\in V(\Gamma)}\phi^\Gamma_v=d$. 
	
		Such a polarisation is said to be \emph{general} if it further satisfies the property
		\begin{equation*}
			\phi^\Gamma_W -\frac{\textup{val}(W)}{2}\notin\ZZ,
		\end{equation*}
		for each nontrivial biconnected $W\subset V(\Gamma)$. Here $\phi^\Gamma_W=\sum_{v\in W}{\phi^\Gamma_v}$.
		
		We say that $\phi^\Gamma$ is \emph{compatible with $\Ggn$-morphisms} if for each $\Gamma'\in\Ggn$ and each two morphisms $c:\Gamma\to\Gamma'$ and $\tilde{c}:\Gamma\to\Gamma'$ in $\Ggn$, it satisfies
		\begin{equation*}
			\phi^\Gamma_{c^{-1}(Z)}=\phi^\Gamma_{\tilde{c}^{-1}(Z)},
		\end{equation*}
		for each $Z\in\C(\Gamma')$.
	\end{definition}
	Similarly, in the universal case we give the following definition, originally stated in \cite[Definition~3.1]{Kass_2019}.
	\begin{definition}\label{phiunivdef}
       Let $\{\phi^\Gamma\}_{\Gamma\in\Ggn}$ be a family of numerical polarisations which are \emph{compatible with contractions and automorphisms} in the sense of \cite[Section $3$]{Kass_2019}.
       	A \emph{degree $d$ universal numerical polarisation} is the limit 

        \begin{equation*}
            \phi:=\varprojlim_{\Ggn}\phi^\Gamma.
        \end{equation*}
  
	\end{definition}

        \begin{proposition-definition}\label{phitocdef}
        Let $\Gamma\in\Ggn$ be a graph, and let $\phi^\Gamma$ be a degree $d$ general numerical polarisation on $\Gamma$, compatible with $\Ggn$-morphisms. We define the \emph{classical $c$-stability condition $\m^\Gamma(\phi^\Gamma)$ induced by $\phi^\Gamma$ on $\Gamma$} by the assignment
        \begin{equation*}
            \m^\Gamma(\phi^\Gamma):=\{\mEhA^\Gamma(\phi^\Gamma):=\Bigl\lceil\phi^\Gamma_{Z(e,h,A)}-\frac{e}{2}\Bigr\rceil:(e,h,A)\in\D(\Gamma)\},
        \end{equation*}
        where $Z(e,h,A)\in\alpha^{-1}(e,h,A)$.
        \end{proposition-definition}
        
        \begin{proof}
        The induced stability condition $\m^\Gamma(\phi^\Gamma)$ is the image, via the map of Proposition~\ref{cstab=vstab}, of the $V$-stability condition 
        \begin{equation*}
			\n^\Gamma(\phi^\Gamma):=\{\n^\Gamma(\phi^\Gamma)_Z=\Bigl\lceil\phi^\Gamma_Z-\frac{\textup{val}(Z)}{2}\Bigr\rceil:Z\in\C(\Gamma)\},
	\end{equation*}
	induced by $\phi^\Gamma$, defined in \cite[Example~1.5]{viviani2023new}.
        
        Thus, to show that $\m^\Gamma(\phi^\Gamma)$ is well defined, it is sufficient to notice that whenever $\phi^\Gamma$ is compatible with $\Ggn$-morphisms, so is the induced $V$-stability condition.
	Indeed, this is true by the following     observation. Let $\Gamma'\in\Ggn$ and $c,\tilde{c}:\Gamma\to\Gamma'$ be morphisms. For each $Z\in V(\Gamma')$, 
			\begin{equation*}
				\textup{val}(c^{-1}(Z))=\textup{val}(Z)=\textup{val}(\tilde{c}^{-1}(Z)),
			\end{equation*}
			since both $c$ and $\tilde{c}$ only contract edges joining vertices in $c^{-1}(Z)$ and $\tilde{c}^{-1}(Z)$ respectively.
        \end{proof}

        A more explicit form for the classical stability condition $\m^\Gamma(\phi^\Gamma)$ will prove useful for constructing examples in the following section, thus we show:
 
	\begin{proposition}\label{phitocprop}
		Let $g\geq 2$, and let $\Gamma\in\Ggn$ be a $2$-connected graph (i.e. a graph without separating edges). Let $\phi^\Gamma$ be a general degree $d$ numerical polarisation on $\Gamma$, compatible with $\Ggn$-morphisms. Then, the $\phi^\Gamma$-induced classical $c$-stability condition $\m^\Gamma(\phi^\Gamma)$ satisfies
	\begin{equation*}
			\mEhA^\Gamma(\phi^\Gamma)=	\Bigl\lceil\frac{(e+2h-2)(d+1-g)+(2g-e-2h)\phi^\Gamma_A-(e+2h-2)\phi^\Gamma_{A^\mathsf{c}}}{2g-2}  \Bigr\rceil +h-1,
		\end{equation*}
            where $\phi^\Gamma_A=\sum_{v:A(v)\subset A}\phi^\Gamma_v$, for $v\in V(\Gamma)$.
	\end{proposition}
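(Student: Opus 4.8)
The plan is to reduce everything to Proposition-Definition \ref{phitocdef}, which gives $\mEhA^\Gamma(\phi)=\lceil\phi_{Z(e,h,A)}-\frac{e}{2}\rceil$ for any representative $Z=Z(e,h,A)\in\alpha^{-1}(e,h,A)$, and then to compute $\phi_Z$ in closed form. Write $\deg\omega_Z=2g(Z)-2+\mathrm{val}(Z)=2h-2+e$ for the canonical degree of $Z$; since the complementary subgraph $Z^{\mathsf c}$ realises $(e,g+1-e-h,A^{\mathsf c})$ we have $\deg\omega_{Z^{\mathsf c}}=2g-e-2h$ and $\deg\omega_Z+\deg\omega_{Z^{\mathsf c}}=2g-2$, which is positive because $g\geq2$. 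Moreover, because $(e,h,A)\in\D(\Gamma)$, the marked flags split cleanly along $Z\sqcup Z^{\mathsf c}$: every vertex carrying a marking has its markings entirely in $A$ (if it lies in $Z$) or entirely in $A^{\mathsf c}$ (if it lies in $Z^{\mathsf c}$), so that $\phi_A$ and $\phi_{A^{\mathsf c}}$ jointly record the full marking contribution of $\phi$.

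First I would establish the explicit shape of a classical stability condition compatible with $\Ggn$-morphisms. Compatibility with every contraction $\Gamma\to\Gamma'$ and with the automorphisms of $\Gamma$ means that $\phi$ arises from a universal classical stability condition in the sense of Definition \ref{phiunivdef}, whose shape is governed by the canonical class and by the weights of the markings; concretely, the parametrisation of \cite[Section~3]{Kass_2019} yields
\begin{equation*}
\phi_Z=\frac{\deg\omega_Z}{2g-2}\bigl(d-(\phi_A+\phi_{A^{\mathsf c}})\bigr)+\phi_A.
\end{equation*}
The content of this step is that the non-marking part of $\phi$ is forced to be proportional to $\deg\omega$; this is exactly where $g\geq2$ and the absence of separating edges enter, guaranteeing $2g-2>0$ and that the valences of biconnected subgraphs behave additively under the contractions defining $\D(\Gamma)$.

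With this in hand the rest is algebra. Substituting $\deg\omega_Z=e+2h-2$ and $\deg\omega_{Z^{\mathsf c}}=2g-e-2h$ and clearing the denominator, the displayed expression rearranges into
\begin{equation*}
\phi_Z-\frac{e}{2}=\frac{(e+2h-2)(d+1-g)+(2g-e-2h)\phi_A-(e+2h-2)\phi_{A^{\mathsf c}}}{2g-2}+(h-1),
\end{equation*}
where I use $\frac{e}{2}+h-1=\frac{1}{2}\deg\omega_Z$ together with $\frac{2g-2}{2}=g-1$ to absorb the slope terms into the shift $h-1$. A quick consistency check is that applying the same identity to $Z^{\mathsf c}$ and adding the two recovers $\phi_Z+\phi_{Z^{\mathsf c}}=d$.

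Finally, since $h-1\in\ZZ$, I would pull it out of the ceiling using $\lceil x+k\rceil=\lceil x\rceil+k$ for $k\in\ZZ$; taking $\lceil\,\cdot\,\rceil$ of the last display and invoking Proposition-Definition \ref{phitocdef} then produces exactly the claimed formula for $\mEhA^\Gamma(\phi)$. I expect the main obstacle to be the second step, namely pinning down the explicit linear form of $\phi$: proving that compatibility with $\Ggn$-morphisms forces the canonical slope and that the marking weights reassemble precisely as $\phi_A$ and $\phi_{A^{\mathsf c}}$. Once that form is secured, the remainder is a substitution of the three canonical-degree values together with the elementary integrality of the ceiling function.
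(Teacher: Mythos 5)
Your overall architecture (start from Proposition--Definition~\ref{phitocdef}, find a closed form for $\phi_Z$, then do the ceiling algebra) is the same as the paper's, and your third step --- the algebraic rearrangement and pulling the integer $h-1$ out of the ceiling --- is correct. The problem is your second step, which you yourself flag as the main obstacle: it is asserted, not proven, and it cannot be proven in the form you state it. Compatibility with $\Ggn$-morphisms (Definition~\ref{fclassicalcj}) is a finite set of linear conditions on the single divisor $\phi\in\RR^{V(\Gamma)}$, involving only pairs of contractions out of $\Gamma$; it does not place $\phi$ inside a compatible family $\{\phi^{\Gamma'}\}_{\Gamma'\in\Ggn}$ in the sense of Definition~\ref{phiunivdef}, so the appeal to the parametrisation of \cite[Section~3]{Kass_2019} begs the question. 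Worse, the identity you extract from it,
\begin{equation*}
\phi_Z=\frac{\deg\omega_Z}{2g-2}\bigl(d-(\phi_A+\phi_{A^{\mathsf{c}}})\bigr)+\phi_A,
\end{equation*}
with $\phi_A$ a sum of vertex values at marked vertices, is false on general $2$-connected graphs. Take $g=3$, $n=1$ and $\Gamma=V(2,1,\{1\})$, the vine graph whose two vertices both have genus $1$. Every compatibility constraint on $\Gamma$ is vacuous (the two edge-contractions have the same target in $\Ggn$ and induce the same map on vertex subsets, and automorphisms fix the vertices), so any general $\phi$ qualifies; with $d=0$ and $\phi_{v_1}=3.5$ one has $\phi_Z=3.5$ for $Z=\{v_1\}$, while your right-hand side gives $\tfrac{2}{4}(0-3.5)+3.5=1.75$, and correspondingly $\lceil\phi_Z-e/2\rceil=3$ whereas the displayed formula of the Proposition evaluates to $\lceil 3/4\rceil=1$. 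Your identity is valid only when every marked vertex has $\deg\omega_v=0$, i.e.\ genus $0$ and valence $2$.

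This is precisely why the paper does not argue directly on $\Gamma$. Its proof first reduces to trivalent graphs: since the induced condition $\m^\Gamma(\phi)$ depends only on the stability polytope containing $\phi$, generality of $\phi$ allows one to replace $\phi$ by the pushforward of a stability condition $\phi'$ on a trivalent graph $\Gamma'$ dominating $\Gamma$. On a $2$-connected trivalent graph the structural statement you need becomes elementary and provable: for any two unlabeled vertices, contracting the complement of each gives two morphisms to the same vine graph $V(3,0,\emptyset)$ of $\Ggn$, and compatibility forces the two $\phi$-values to agree; moreover $2$-connectedness forces every marked vertex there to have genus $0$ and valence $2$, so your closed form for $\phi_Z$ does hold, with the coefficient $\deg\omega_Z$ appearing as the number $k$ of unlabeled vertices of $Z$, and the identification $k=e+2h-2$ is the paper's separate counting lemma. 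So your plan is repairable, but only by inserting this reduction and proving the equidistribution of $\phi$ on unlabeled vertices of trivalent graphs; as written, the proof assumes exactly the step that carries all the content.
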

	\begin{proof}		
		We observe that the $V$-stability condition $\n(\phi^\Gamma)$ induced by a numerical polarisation $\phi^\Gamma$ of \cite[Example~1.5]{viviani2023new} depends on the stability polytope containing $\phi^\Gamma$, rather than on the specific $\phi^\Gamma$.
		
		Let $c:\Gamma'\to\Gamma$, be a morphism, and let $\phi^\Gamma$ be a numerical polarisation on $\Gamma$. By generality of $\phi^\Gamma$, there exists a numerical polarisation $\phi^{\Gamma'}$ on $\Gamma'$ such that the pushforward $c(\phi^{\Gamma'})$ is in the same polytope as $\phi^\Gamma$. Therefore it is enough to prove our statement on the initial objects of $\Ggn$, that is, trivalent graphs.	
        
       So, we assume that $\Gamma$ is trivalent. By compatibility with $\Ggn$-morphisms, since $\Gamma$ is $2$-connected, we have $\phi^\Gamma_{v_1}=\phi^\Gamma_{v_2}$ for any pair of unlabeled genus $0$ vertices $v_1,v_2$. Hence, the classical $c$-stability condition induced by $\phi^\Gamma$ on a $2$-connected trivalent graph in $\Ggn$ has elements 
		\begin{equation*}
			\mEhA^\Gamma(\phi^\Gamma)=	\Bigl\lceil\frac{k(d+1-g)+(2g-2-k)\phi^\Gamma_A-k\phi^\Gamma_{A^\mathsf{c}}}{2g-2}  \Bigr\rceil +h-1,
		\end{equation*}
		where $k$ denotes the number of unlabeled vertices in $Z(e,h,A)$. Thus the result holds by the following lemma.
	\end{proof}
	
	\begin{lemma}
		Let $\Gamma\in\Ggn$ be a $2$-connected trivalent graph, that is, a connected graph without separating edges whose every vertex $v$ has valence $\textup{val}(v)=3-n(v)$, and let $\Gamma'\subset\Gamma$ be a nontrivial biconnected subgraph of genus $h$, with $\textup{val}(\Gamma')=e$. Then $\Gamma'$ has $k=e+2h-2$ unlabeled vertices.
	\end{lemma}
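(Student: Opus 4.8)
The plan is to double-count the flags of $\Gamma'$ and to combine this with the first Betti number of $\Gamma'$. Write $W=V(\Gamma')$ and let $n(\Gamma')=|A(\Gamma')|=\sum_{v\in W}n(v)$ be the number of marked flags of $\Gamma'$. I will show that $|W|=n(\Gamma')+e+2h-2$ and that the unlabelled vertices of $\Gamma'$ number exactly $k=|W|-n(\Gamma')$; the lemma is then immediate by subtraction.

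First I would determine how many markings a single vertex can carry. A vertex $v$ with $n(v)\geq 2$ would satisfy $\textup{val}(v)=3-n(v)\leq 1$; as a loop accounts for two units of valence, such a $v$ is incident to no loop and to at most one edge. Then $v$ is either isolated, contradicting connectedness, or joined to the rest of $\Gamma$ by a single separating edge, contradicting $2$-connectedness. Hence $n(v)\in\{0,1\}$ for every vertex. Consequently the labelled vertices of $\Gamma'$ are exactly those with $n(v)=1$, so their number equals $n(\Gamma')$, and the number of unlabelled vertices is $k=|W|-n(\Gamma')$.

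Next I would compute $|W|$. Summing the trivalence relation $\textup{val}(v)=3-n(v)$ over $W$ gives $\sum_{v\in W}\textup{val}(v)=3|W|-n(\Gamma')$. Counting the same flags edge by edge, each edge internal to $\Gamma'$ (loops included) accounts for two of them, while each of the $e=\textup{val}(\Gamma')$ edges leaving $\Gamma'$ accounts for exactly one, so $\sum_{v\in W}\textup{val}(v)=2|E(\Gamma')|+e$. Since the vertices of a trivalent graph have genus $0$ and $\Gamma'$ is connected, the genus of $\Gamma'$ equals its first Betti number, $h=|E(\Gamma')|-|W|+1$. Eliminating $|E(\Gamma')|$ from these relations yields $3|W|-n(\Gamma')=2(|W|+h-1)+e$, that is $|W|=n(\Gamma')+e+2h-2$, and therefore $k=|W|-n(\Gamma')=e+2h-2$.

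The one genuinely delicate step is the first: $2$-connectedness is exactly what forbids valence-$1$ vertices, and hence vertices bearing two markings, which is what puts the labelled vertices in bijection with the marked flags and validates the identity $k=|W|-n(\Gamma')$. I would also record that the genus-$0$ assumption on the vertices enters the Betti-number step; for a subgraph whose vertices carry total genus $g_W$ the same computation returns $k=e+2h-2-2g_W$, so the stated formula relies on $\Gamma$ being maximally degenerate.
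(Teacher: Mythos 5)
Your proof is correct, but it takes a genuinely different route from the paper's. The paper argues by induction on the number $k$ of unlabeled vertices: it peels off a single trivalent vertex $v$, writing $\Gamma'=\overline{\Gamma''\sqcup v}$ with $\Gamma''$ biconnected, and then does a case analysis on $\textup{val}(\Gamma'',v)\in\{1,2\}$ to track how $e$ and $h$ change. Your argument instead is a one-shot double count: summing the trivalence relation $\textup{val}(v)=3-n(v)$ over $W=V(\Gamma')$, comparing with the flag count $2|E(\Gamma')|+e$, and eliminating $|E(\Gamma')|$ via the Betti-number formula $h=|E(\Gamma')|-|W|+1$. What your approach buys: it avoids the existential step implicit in the induction (that one can always remove an unlabeled vertex so that what remains is still biconnected, which the paper asserts rather than proves), it needs no case analysis, and it exposes the precise role of the hypotheses --- $2$-connectedness enters only to force $n(v)\leq 1$ (so labelled vertices biject with marked flags), and the genus-$0$ assumption on vertices enters only in the Betti-number step, yielding the more general formula $k=e+2h-2-2g_W$ when vertices carry genus. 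What the paper's induction buys is mainly locality: it never needs the global identification of labelled vertices with markings, since it only ever inspects the link of the vertex being removed. One pedantic remark: your step ``isolated, contradicting connectedness'' silently assumes $\Gamma$ has more than one vertex, but this is harmless, since a nontrivial proper subgraph $\Gamma'$ exists only when $|V(\Gamma)|\geq 2$, and otherwise the statement is vacuous.
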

	\begin{proof}
		We prove the result by induction on $k$.
		For $k=0$, necessarily we have $h=0$ and $e=2$.\\
		Let $\Gamma'$ have $k+1$ unlabeled vertices. Thus there exists a vertex $v$ such that $\Gamma'=\overline{\Gamma''\sqcup v}$, where $\Gamma''$ is a biconnected subgraph with $k$ unlabeled edges. Since $v$ is trivalent, we may consider two cases:
		\begin{itemize}
			\item[val$(\Gamma'',v)=1$.] That is $g(\Gamma')=g(\Gamma'')$. By inductive hypothesis, $k=\textup{val}(\Gamma'')+2g(\Gamma'')-2$, hence $k+1=\textup{val}(\Gamma'')+2h-1$, but, since $v$ is trivalent, it must be $e=\textup{val}(\Gamma'')+1$, which proves the claim.
			\item[val$(\Gamma'',v)=2$.] That is $h=g(\Gamma'')+1$ and $e=\textup{val}(\Gamma'')-1$. Therefore, $k+1=\textup{val}(\Gamma'')+2g(\Gamma'')-1=e+2h-2$.
		\end{itemize}
	\end{proof}

	By compatibility with $\Ggn$-morphisms, the universal analogous is well defined.
	
	\begin{definition}\label{phitocuniv}
		Let $\phi=\{\phi^\Gamma\}_{\Gamma\in\Ggn}$ be a collection of a $\Ggn$-morphisms compatible numerical polarisations $\phi^\Gamma$ for each $\Gamma$ in $\Ggn$. If, for each pair of graphs $\Gamma, \Gamma'$, we have 
		\begin{equation*}
			\mEhA^\Gamma(\phi^\Gamma)=\mEhA^{\Gamma'}(\phi^{\Gamma'}),
		\end{equation*} 
		for any $(e,h,A)\in\D(\Gamma)\cap\D(\Gamma')$,
		 we define the \emph{$\phi$-induced semi-classical universal stability condition $\m(\phi)$} as
		 
		 \begin{equation*}
		 	\m(\phi):=\varprojlim_{\Gamma\in\Ggn}\m^\Gamma(\phi^\Gamma).
		 \end{equation*}
		 
		 If, furthermore, $\phi$ is a universal numerical polarisation, then $\m(\phi)$ is said to be \emph{classical}.
	\end{definition}
	
	Notice that if there exists a map $c:\Gamma\to\Gamma'$ then $\D(\Gamma')\subset D(\Gamma)$, hence a prescription for every $\Gamma\in\Ggn$ is the same as a prescription of $\phi^\Gamma$ for all the $\Gamma$ initial objects (i.e. trivalent graphs).
	
	\begin{remark}

		Clearly, not only each element $\phi^\Gamma$ of a contractions and automorphisms compatible collection $\{\phi^\Gamma\}_{\Gamma\in\Ggn}$ is compatible with $\Ggn$-morphisms, but their collection induces a family $\{\m^\Gamma\}_{\Gamma\in\Ggn}$, whose integral data $\mEhA^\Gamma$ is the same for every $\Gamma$ such that $(e,h,A)\in\D(\Gamma)$.
		As a consequence, we obtain that every classical universal stability condition is semi-classical. The converse, however, does not hold, as shown by \cite[Example~6.15]{PTgenus1} (see also Example~\ref{ex24} below).
	\end{remark}

	The following lemma allows us to check whether a $c$-stability condition on a graph $\Gamma$ is classical by checking it only on half of the elements of $\D(\Gamma)$.
	\begin{lemma}\label{phiinequlemma}
		Let $\m^\Gamma$ be a degree $d$ classical $c$-stability condition on a $2$-connected graph $\Gamma\in\Ggn$, with $g\geq2$, and let $(e,h,A)\in\D(\Gamma)$. Let $\phi^\Gamma\in\RR^n$ be a numerical polarisation on $\Gamma$ such that $\mEhA^\Gamma =\mEhA^\Gamma(\phi^\Gamma)$. Then $\m^\Gamma_{(e,g+1-e-h,A^\mathsf{c})}=\m^\Gamma_{(e,g+1-e-h,A^\mathsf{c})}(\phi^\Gamma)$.
	\end{lemma}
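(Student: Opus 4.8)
The plan is to observe that both collections $\m^\Gamma$ and $\m^\Gamma(\phi)$ are $c$-stability conditions on $\Gamma$, so that each obeys the balancing relation \eqref{prop1Gamma} of Definition~\ref{cstabGamma} with the \emph{same} right-hand side $d+1-e$; the statement then reduces to a one-line cancellation. Explicitly, relation~\eqref{prop1Gamma} applied to the $c$-stability condition $\m^\Gamma$ at $(e,h,A)$ reads
\begin{equation*}
\mEhA^\Gamma+\m^\Gamma_{(e,g+1-e-h,A^\mathsf{c})}=d+1-e,
\end{equation*}
and the same relation applied to $\m^\Gamma(\phi)$ gives
\begin{equation*}
\mEhA^\Gamma(\phi)+\m^\Gamma_{(e,g+1-e-h,A^\mathsf{c})}(\phi)=d+1-e.
\end{equation*}
Subtracting these two identities and cancelling the equal terms $\mEhA^\Gamma=\mEhA^\Gamma(\phi)$ provided by hypothesis leaves precisely $\m^\Gamma_{(e,g+1-e-h,A^\mathsf{c})}=\m^\Gamma_{(e,g+1-e-h,A^\mathsf{c})}(\phi)$, which is the claim.

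The first step is therefore to record that $\m^\Gamma(\phi)$ is indeed a degree $d$ $c$-stability condition on $\Gamma$, so that relation~\eqref{prop1Gamma} is available for it. This is exactly the content of Proposition-Definition~\ref{phitocdef}, so nothing new has to be proved; once it is invoked, the subtraction above concludes the argument.

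The only point that genuinely deserves attention — and which I regard as the heart of the matter — is why $\m^\Gamma(\phi)$ satisfies \eqref{prop1Gamma} with right-hand side $d+1-e$ rather than $d-e$, since this is where the \emph{generality} of $\phi$ enters. Writing $Z=Z(e,h,A)$, using $\m^\Gamma_{(e,g+1-e-h,A^\mathsf{c})}(\phi)=\lceil\phi_{Z^\mathsf{c}}-\tfrac{e}{2}\rceil$ together with $\phi_{Z^\mathsf{c}}=d-\phi_Z$ and the identity $\lfloor\phi_Z+\tfrac{e}{2}\rfloor=\lfloor\phi_Z-\tfrac{e}{2}\rfloor+e$, one computes
\begin{equation*}
\mEhA^\Gamma(\phi)+\m^\Gamma_{(e,g+1-e-h,A^\mathsf{c})}(\phi)=\Bigl\lceil\phi_Z-\tfrac{e}{2}\Bigr\rceil-\Bigl\lfloor\phi_Z-\tfrac{e}{2}\Bigr\rfloor+d-e,
\end{equation*}
which equals $d+1-e$ precisely when $\phi_Z-\tfrac{e}{2}\notin\ZZ$, i.e. under the generality condition $\phi_W-\tfrac{\textup{val}(W)}{2}\notin\ZZ$ of Definition~\ref{fclassicalcj}. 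In this clean form the argument does not even use the hypotheses $g\geq 2$ and $2$-connectedness; these become relevant only if one prefers instead to read the balancing straight off the explicit formula of Proposition~\ref{phitocprop}, whose two complementary instances sum to $d+1-e$. I expect no serious obstacle beyond this bookkeeping.
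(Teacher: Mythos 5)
Your proof is correct, and it reaches the conclusion by a genuinely cleaner route than the paper's own. The paper proves the lemma through Proposition~\ref{phitocprop}: it writes both $\mEhA^\Gamma(\phi)$ and $\m^\Gamma_{(e,g+1-e-h,A^\mathsf{c})}(\phi)$ using the explicit $(2g-2)$-denominator ceiling formula, notes that the two ceiling arguments sum to $d+1-g$, flips $\bigl\lceil (d+1-g)-X\bigr\rceil$ into $(d+1-g)-\lfloor X\rfloor$ (implicitly using $X\notin\ZZ$, i.e.\ generality of $\phi$, exactly where you use it), and then concludes by the complementarity property \eqref{prop1Gamma} of $\m^\Gamma$. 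That computation is precisely your argument in disguise: its only content is that $\m^\Gamma(\phi)$ obeys the balancing relation \eqref{prop1Gamma} with the same right-hand side $d+1-e$ as $\m^\Gamma$, after which the claim is the one-line subtraction you state. What your packaging buys is that, by invoking Proposition-Definition~\ref{phitocdef} (or by your direct verification from $\mEhA^\Gamma(\phi)=\lceil\phi_{Z(e,h,A)}-e/2\rceil$), you bypass the explicit trivalent formula entirely, so the hypotheses $g\geq2$ and $2$-connectedness indeed become superfluous; they sit in the statement only because the paper's proof is routed through Proposition~\ref{phitocprop}, and your observation on this point is accurate. Two bookkeeping points you should make explicit: the identification $\m^\Gamma_{(e,g+1-e-h,A^\mathsf{c})}(\phi)=\bigl\lceil\phi_{Z^\mathsf{c}}-e/2\bigr\rceil$ requires that $Z^\mathsf{c}$ is an admissible representative of the triple $(e,g+1-e-h,A^\mathsf{c})$ and that the definition is independent of this choice, which is the compatibility-with-$\Ggn$-morphisms part of Proposition-Definition~\ref{phitocdef} (via Proposition~\ref{cstab=vstab}); and, just as in the paper's proof, the hypothesis ``classical stability condition'' must be read as degree $d$, general, and compatible with $\Ggn$-morphisms, since otherwise $\m^\Gamma(\phi)$ is not even defined.
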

	\begin{proof}
		By Proposition~\ref{phitocprop}, $\mEhA^\Gamma=\mEhA^\Gamma(\phi^\Gamma)$ if and only if
		\begin{equation}\label{ceil}
			\Bigl\lceil\frac{(e+2h-2)(d+1-g)+(2g-e-2h)\phi^\Gamma_A-(e+2h-2)\phi^\Gamma_{A^\mathsf{c}}}{2g-2}  \Bigr\rceil +h-1=\mEhA^\Gamma,
		\end{equation}
		while $\m^\Gamma_{(e,g+1-e-h,A^\mathsf{c})}=\m^\Gamma_{(e,g+1-e-h,A^\mathsf{c})}(\phi^\Gamma)$ if and only if 
		\begin{equation}\label{ceil2}
			\Bigl\lceil\frac{(-e+2g-2h)(d+1-g)+(e+2h-2)\phi^\Gamma_{A^\mathsf{c}}-(2g-e-2h)\phi^\Gamma_{A}}{2g-2}  \Bigr\rceil +g-e-h=\m^\Gamma_{(e,g+1-e-h,A^\mathsf{c})}.
		\end{equation}
		Let $X$ denote the argument of the ceiling function in \eqref{ceil}. We observe that the latter equation can be written as
		\begin{equation*}
			\Bigl\lceil\frac{2g(d+1-g)}{2g-2}-\Big(X+\frac{2(d+1-g)}{2g-2}\Big)\Bigr\rceil+g-e-h=\m^\Gamma_{(e,g+1-e-h,A^\mathsf{c})},
		\end{equation*}
		that is
		\begin{equation*}
			\lceil X \rceil+h-1=d+1-e-\m^\Gamma_{(e,g+1-e-h,A^\mathsf{c})}.
		\end{equation*}
		Therefore, by property \eqref{prop1}, we conclude that equations \eqref{ceil} and \eqref{ceil2} are equivalent.
	\end{proof}
	
	Clearly, if we choose a family $\{\phi^\Gamma\}_{\Gamma\in\Ggn}$ defining a universal numerical polarisation, the same proof applied to all $(e,h,A)\in\D_{g,n}$ shows that we can check if a universal stability condition is classical on half the vertices of the vine graphs.

	\section{Fine Compactified Universal Jacobians}
	\subsection{Curves and dual graphs}
	We recall the curve-theoretic notation we will use throughout the paper. We refer to \cite{arbarello2013geometry} and \cite{pagani2023stability} for a detailed exposition.
	
	We will work over a fixed algebraically closed field $k$.
	
	A \emph{curve} over an extension $K$ of $k$ is a $\textup{Spec}(K)$-scheme $X$ proper over $K$, geometrically connected and of pure dimension $1$. A curve $X$ is \emph{nodal} if it is geometrically reduced and, when passing to an algebraic closure $\overline{K}$, its local ring at every singular point is isomorphic to $\overline{K}[[x,y]]/(xy)$.	
	A \emph{$n$-pointed curve over $K$} is a $(n+1)$-tuple $(X,p_1,\dots,p_n)$, with $X$ a curve and $p_1,\dots,p_n$ distinct smooth $K$-points of $X$.
	
	If $X$ is a $n$-pointed nodal curve, its \emph{dual graph $\Gamma(X)$} is a graph with a vertex $v$ for each irreducible component $X^v_{\overline{K}}$ of a base change of $X$ to an algebraic closure $\overline{K}$, and an edge between two vertices for each node, defined over $\overline{K}$, between the corresponding irreducible components. The dual graph $\Gamma(X)$ is weighted with $g(v)=g(X^v_{\overline{K}})$, the geometric genus of the irreducible components, and has $n$ labelled flags corresponding to the marked points. 
	
	A \emph{family of curves} over a $k$-scheme $S$ is a proper, flat morphism $\X\to S$ whose
	fibres are curves. A family of curves $\X\to S$ is a \emph{family of nodal curves} if the
	fibres over all geometric points are nodal curves.
	
	Let $\M_{g,n}$ be the moduli stack of smooth $n$-pointed curves and $\Mbargn$ its Deligne-Mumford compactification. 	
	Given a point $X\in\Mbargn$, we denote by $\Gamma(X)$ the (only) graph in $\Ggn$ isomorphic to the dual graph of $X$. Thus, we can define $c$-stability conditions on curves.
	
	\begin{definition}
		Let $[X]\in\Mbargn$. We define a \emph{degree $d$ $c$-stability condition on $X$} to be a degree $d$ $c$-stability condition $\m^{\Gamma(X)}$ on its dual graph $\Gamma(X)$.
	\end{definition}
    
	Particular relevance will be covered by \emph{vine curves}, that is curves with two irreducible components, whose dual graphs are the vine graphs. Let $(V(e,h,A),v_1)$ be a pair given by a vine graph and its vertex such that $(e,h,A)=(\textup{val}(v_1),g(v_1),A(v_1))$, we denote by $C(e,h,A)$ any vine curve whose dual graph is isomorphic to $V(e,h,A)$. 
   
    A curve $C(e,h,A)$ has two irreducible components, corresponding to the two vertices of the graph, hence we denote by $C(e,h,A)^{v_1}$ its irreducible component corresponding to the vertex $v_1$.
    When $n=0$ and $V(e,h,A)=V(g-2h+1,h,\emptyset)$, $C(g-2h+1,h,\emptyset)^{v_1}$ may identify any of the two irreducible components of the curve. This, however, doesn't represent an issue, as what follows is symmetric with respect to swapping the two components.
 
	Recall that, since \'etale specialisations of points in $\Mbargn$ induce all the morphisms in $\Ggn$ (\cite{CAVALIERI_2020}), a universal stability condition can be seen as the assignment of a stability condition on every vine curves such that they agree on common specialisations (Lemma \ref{gluelemma}). 
	
	\subsection{Fine compactified (universal) Jacobians}
	Again, the notation concerning Jacobians is taken from \cite{pagani2023stability}. We recall the main concepts we will be using in this section.
	
	Let $X$ be a nodal curve, and let $F$ be a coherent sheaf on $X$. $F$ has \emph{rank $1$} if its localisation at each generic point has length $1$, and it is \emph{torsion-free} if it has no embedded components. We say that a rank $1$ torsion-free sheaf $F$ on $X$ is \emph{simple} if its automorphism group is $\mathbb{G}_m$.
	
	We define the \emph{multidegree} of a rank $1$ torsion-free sheaf $F$ on a nodal curve $X$ by 
	
	\begin{equation*}
		\underline{\textup{deg}}(F):=(\textup{deg}(F_{\tilde{X}^v}))\in\ZZ^{V(\Gamma^X)},
	\end{equation*}
	where $F_{\tilde{X}^v}$ denotes the maximal torsion free quotient of the pullback of $F$ to the normalisation $\tilde{X}^v$ of the irreducible component $X^v$ of $X$. The \emph{total degree} of $F$ is $\textup{deg}_X(F):=\chi(F)-1+p_a(X)$, with $p_a(X)$ the arithmetic genus of $X$. We denote by Simp$^d(X)$ the space of rank $1$ torsion free simple sheaves of degree $d$ on $X$.
	
	Analogously, a \emph{family of rank $1$ torsion-free sheaves} over a family of curves $\X\to S$ is a coherent sheaf on $\X$, flat over $S$, such that its fibres over geometric points are rank $1$ torsion-free sheaves.
	
	The algebraic space $\textup{Pic}^d(\X/S)$ parametrising line bundles of relative degree $d$ embeds in the algebraic space $\textup{Simp}^d(\X/S)$ parametrising rank $1$ torsion-free simple sheaves of degree $d$ on the family of nodal curves $\X\to S$, by \cite{ALTMAN198050} and \cite{esteves}.
	
	While being locally of finite type over $S$ and satisfying the existence part of the valuative criterion for properness, as proven in \cite{esteves}, in general $\textup{Simp}^d(\X/S)$ fails to be separated and of finite type. 
	
	Given a nodal curve $X$, a \emph{degree $d$ fine compactified Jacobian} is a geometrically connected open subscheme $\overline{J}\subset\textup{Simp}^d(X)$, that is proper over $\textup{Spec}(K)$.
	
	The analogous notion is defined for families of nodal curves and in particular a \emph{degree $d$ fine compactified universal Jacobian of type $(g,n)$} is an open proper substack $\overline{\J}_{g,n}\subset\textup{Simp}^d(\overline{\C}_{g,n}/\Mbargn)$ (which is automatically connected).
	We denote its fibre over a curve $X$ by $\overline{\J}_{g,n}(X)$, dropping the subscript when it is clear from the context.

	\subsection{Correspondence with universal stability conditions}
	
    In this section, we combine the classification result for fine compactified universal Jacobians in \cite[Corollary~7.13]{pagani2023stability} and the correspondence between $V$-stabilities and fine compactified Jacobians in \cite[Theorem~1.20, Theorem~2.33]{viviani2023new}, to show that universal stability conditions bijectively correspond to fine compactified universal Jacobians. Thus, we obtain an explicit combinatorial description, thanks to which we will be able to provide examples of fine compactified universal Jacobians that are not semi-classical, i.e. whose fibres are not all classical fine compactified Jacobians.
    
    Let $\overline{\J}_{g,n}$ be a fine compactified universal Jacobian, and let $[\F]\in\overline{\J}_{g,n}$ be a family of sheaves. By \cite[Remark 7.10]{pagani2023stability}, if $X$ and $X'$ are two curves with $\Gamma(X)=\Gamma(X')$, then $\underline{\textup{deg}}(\F_{|_X})=\underline{\textup{deg}}(\F_{|_{X'})}$. In particular, for each $(e,h,A)\in\D_{g,n}$, the degree of $\F$ on the first component $C(e,h,A)^{v_1}$ of each vine curve of type $(e,h,A)$ is a well-defined integer, which we denote by $\textup{deg}_{C(e,h,A)^{v_1}}(\F)$. Thus, we can give the following definition.
    \begin{definition}\label{jtomdef}
        Let $\overline{\J}_{g,n}$ be a degree $d$ fine compactified universal Jacobian. We define the \emph{universal stability condition associated to $\overline{\J}_{g,n}$} to be $\m(\overline{\J}_{g,n})$, given by
    \begin{equation*}
        \m(\overline{\J}_{g,n})_{(e,h,A)}:=\textup{min}_{[\F]\in\overline{\J}_{g,n}}\{\textup{deg}_{C(e,h,A)^{v_1}}(\F)\},
    \end{equation*}
    for each vine graph triple $(e,h,A)\in\D_{g,n}$.
    \end{definition} 
    \begin{remark}\label{jtomremark}
    Notice that the restriction $\m(\overline{\J}_{g,n})^\Gamma_{(e,h,A)}$ of $\m(\overline{\J}_{g,n})$ to any graph $\Gamma$ coincides with the $V$-stability condition associated to the poset of orbits of \cite[Definition 2.14]{viviani2023new}, via Theorem 1.20 in loc.cit. Therefore, since clearly $\m(\overline{\J}_{g,n})^\Gamma_{(e,h,A)}=\m(\overline{\J}_{g,n})^{\Gamma'}_{(e,h,A)}$, for each triple $(e,h,A)\in\D(\Gamma)\cap\D(\Gamma')$, Lemma~\ref{gluelemma} shows that $\m(\overline{\J}_{g,n})=\varprojlim_{\Gamma\in\Ggn}\{\m(\overline{\J}_{g,n})^\Gamma\}$ is indeed a universal stability condition.   

    More specifically, the bijection provided by \cite[Theorem~1.20]{viviani2023new}, together with the compatibility with all the morphisms of $\Ggn$ induced by \'etale specialisations in $\Mbargn$ guaranteed by the above reasoning, maps $\m(\overline{\J}_{g,n})$ to the \emph{associated assignment} (\cite[Definition~7.1]{pagani2023stability}) $\sigma_{\overline{\J}_{g,n}}$ of ${\overline{\J}_{g,n}}$.
    \end{remark}

    Viceversa, we assign a fine compactified universal Jacobian to a universal stability condition in the following way.
    \begin{definition}\label{stablesheavesdef}
    Let $\F$ be a family of rank $1$ torsion free simple sheaves on the universal family $\overline{\C}_{g,n}/\Mbargn$, and let $\m$ be a universal stability condition of type $(g,n)$. We say that $\F$  is \emph{$\m$-stable} if
    the inequality 
    \begin{equation*}
        \textup{deg}_{C(e,h,A)^{v_1}}(\F)\geq \mEhA
    \end{equation*}
    is satisfied for all $(e,h,A)\in\D_{g,n}$.
    \end{definition}
    \begin{lemma-definition}\label{mtojlemma}
    Let $\m$ be a degree $d$ universal stability condition of type $(g,n)$. The \emph{fine compactified universal Jacobian associated to $\m$} as
    \begin{equation*}
        \overline{\J}_{g,n}(\m):=\{\F\in\textup{Simp}^d(\overline{\C}_{g,n}/\Mbargn): \textup{$\F$ is $\m$-stable} \}
    \end{equation*}
    is, indeed, a fine compactified universal Jacobian.
    \end{lemma-definition}
    \begin{proof}
        By Lemma~\ref{gluelemma}, $\m$ is the inverse limit of a family $\{\m^\Gamma\}_{\Gamma\in\Ggn}$. For each $\Gamma\in\Ggn$ and for each curve $X\in\Mbargn$ such that $\Gamma(X)=\Gamma$, the fibre  $\overline{\J}_{g,n}(\m)_{|_X}$ coincides with the $V$-open subset $\overline{J}_X(\m^\Gamma)$ of \cite[Lemma-Definition~2.30]{viviani2023new}, by Proposition~\ref{cstab=vstab}. Moreover, if $\sigma(\m^\Gamma)$ is the stability condition (according to \cite[Definition~4.3]{pagani2023stability}) associated to $\m^\Gamma$ by \cite[Theorem~1.20]{viviani2023new}, \cite[Theorem~2.33]{viviani2023new} states that $\overline{J}_X(\m^\Gamma)=\overline{J}_{\sigma(\m^\Gamma)}$ (\cite[Definition~6.2]{pagani2023stability}).
        
        Finally, since $\mEhA^\Gamma=\mEhA^{\Gamma'}$ for each $(e,h,A)\in\D(\Gamma)\cap\D(\Gamma')$, the family $\{\sigma(\m^\Gamma)\}_{\Gamma\in\Ggn}$ is a family of degree $d$ stability conditions on $\overline{\C}_{g,n}/\Mbargn$ (\cite[Definition~4.7]{pagani2023stability}). Therefore, by \cite[Theorem~6.3]{pagani2023stability}, we conclude that $\overline{\J}_{g,n}(\m)$ is a degree $d$ fine compactified universal Jacobian of type $(g,n)$.
    \end{proof}

    In particular, we give the following:
    
    \begin{definition}\label{classicaljacdef}
        Let $\m$ be a degree $d$ universal stability condition of type $(g,n)$. We say that the fine compactified universal Jacobian $\overline{\J}_{g,n}(\m)$ associated to $\m$ is \emph{classical} (resp. \emph{semi-classical}) if $\m$ is classical (resp. semi-classical). 

        Let $[X]\in\Mbargn$. We say that the fibre $\overline{\J}_X(\m^{\Gamma(X)})$ of $\overline{\J}_{g,n}(\m)$ over $[X]$ is \emph{classical} if the stability condition $\m^{\Gamma(X)}=\m_{|_{\D(\Gamma(X))}}$ on the dual graph $\Gamma(X)$ is classical.
    \end{definition}

    Finally, we may prove the main result  of this section, that is, we obtain a complete classification of fine compactified universal Jacobians in terms of universal stability conditions.	
	
	\begin{proof}[Proof of Theorem~\ref{mainthm}] 
 
    Since both arrows are well defined by the above discussion, we need only to show they are indeed mutual inverses.
        Let $\overline{\J}_{g,n}$ be a degree $d$ fine compactified universal Jacobian. By Remark~\ref{jtomremark}, the bijections given by \cite[Theorem~1.20]{viviani2023new} map the stability condition $\m(\overline{\J}_{g,n})$ to the associated assignment of $\overline{\J}_{g,n}$. Now, the proof of Lemma-Definition~\ref{mtojlemma}, shows that the fine compactified universal Jacobian associated to $\m(\overline{\J}_{g,n})$, is exactly the moduli space of $\sigma(\m)$-stable sheaves (according to \cite[Definition~6.2]{pagani2023stability}). Hence \cite[Corollary~7.19]{pagani2023stability} proves that $\overline{\J}_{g,n}(\m(\overline{\J}_{g,n}))=\overline{\J}_{g,n}$. Analogously, by the second part of the same corollary, we see that $\m(\overline{\J}_{g,n}(\m))=\m$, thus concluding our proof.

	\end{proof}

	\vspace{.5cm}
	
	Such a description allows us to show that, for each hyperbolic pair $(g,n)$, there exists a finite number of degree $d$ fine compactified universal Jacobians of type $(g,n)$, up to the translation action of the universal, relative degree~$0$ Picard group $\textup{PicRel}^0_{g,n}(\ZZ)$ (\cite[Definition~2.1]{Kass_2019}). If $g\geq 2$ and $n\geq 1$, the generators of $\textup{PicRel}^0_{g,n}(\ZZ)$ are:
	\begin{itemize}
		\item the boundary divisors $\O(C(1,h,A)^{v_1})$, for any $(1,h,A)\in\D_{g,n}$;
		\item the line bundles $\O(\Sigma_1-\Sigma_j)$, for $j=2,\dots,n$;
		\item $\O((2g-2)\Sigma_1)\otimes\omega_\pi^{\otimes-1}$.
	\end{itemize}
		 Here $\Sigma_j$ denotes the $j$-th section of the universal curve $\pi:\overline{\C}_{g,n}\to\overline{\M}_{g,n}$, and $\omega_\pi$ is the relative dualising sheaf.

	Recall that $\mEhA$ is the minimum degree of a line bundle in the fine compactified universal Jacobian associated to $\m$ on the genus $h$ component with markings $A$, of a vine curve with $e$ edges. Hence, the action of $\textup{PicRel}^0_{g,n}(\ZZ)$ by tensor product with the multidegree of a line bundle on all curves on the set of universal stability conditions is the translation by the degree of such line bundle. In particular, the generators of $\textup{PicRel}^0_{g,n}(\ZZ)$ act in the following way:
	\begin{align*}
		\O(C(1,h',A')^{v_1})\cdot\mEhA&=\begin{cases*}
			\mEhA-1,\text{ if } (e,h,A)=(1,h',A')\\
			\mEhA+1,\text{ if } (e,h,A)=(1,g-h',[n]\setminus A')\\
			\m_{(e,h,A)},\text{ otherwise}
		\end{cases*};\\[5pt]
		\O(\Sigma_1-\Sigma_j)\cdot \m_{(e,h,A)}&=\begin{cases*}
			\m_{(e,h,A)}+1,\text{ if } 1\in A \text{ and }j\notin A\\
			\m_{(e,h,A)}-1,\text{ if } 1\notin A \text{ and }j\in A\\
			\m_{(e,h,A)},\text{ otherwise}		\end{cases*};\\[5pt]
		\O((2g-2)\Sigma_1)\otimes\omega_\pi^{\otimes-1}\cdot \m_{(e,h,A)}&=\begin{cases*}
			\m_{(e,h,A)}+2g-2h-e,\text{ if } 1\in A\\
			\m_{(e,h,A)}-2h-e+2,\text{ if } 1\notin A
		\end{cases*}.
	\end{align*}
	This leads us to the following. 
	\begin{lemma}\label{translationlemma}
		The degree $d$ universal stability conditions $\m$ on of type $(g,n)$, with $g\geq 2$ and $n\geq 1$, such that:
		\begin{itemize}
			\item\label{boundDiv} $\m_{(1,h,A)}=\begin{cases*}
				0, \textup{ if }1\in A\\
				d, \textup{ if }1\notin A
			\end{cases*}$, for each $(1,h,A)\in\D_{g,n}$;
			\item $\m_{(2,0,\{1\})}\in\{0,\dots,2g-3\}$;
			\item $\m_{(2,0,\{i\})}=0$ for each $i\neq 1$;
		\end{itemize}
 form a full set of representatives for the action of $\textup{PicRel}^0_{g,n}(\ZZ)$ on the set of degree $d$ universal stability conditions $\m$ of type $(g,n)$.
	\end{lemma}
	\begin{proof} 	Let $\m$ be a universal stability condition. Applying $\otimes\O(C(1,h,A)^{v_1})^{\otimes\m_{(1,h,A)}}$, for each $(1,h,A)\in\D_{g,n}$ such that $1\in A$, we obtain a translation equivalent stability condition that satisfies the first requirement.
		
	Now, let us consider the latter two conditions.
	First, we want to show that $\m$ is identified by a set of differences with its elements of the form $\m_{(2,0,\{i\})}$, for $i=1,\dots,n$. The statement is clearly true for $\m_{(2,0,A)}$, where $A\neq \emptyset$. In fact, the iterate application of property \eqref{prop2}, shows that $\m_{(2,0,A)}$ is uniquely determined by 
		
		\begin{equation*}
			\m_{(2,0,A)}-\sum_{i\in A}\m_{(2,0,\{i\})}.
		\end{equation*}
		
		We observe that, by the same property, an integer $\m_{(e,h,A)}$, with $(e,h)\neq(2,0)$ and $A\neq\emptyset$, is uniquely determined by the difference $\mEhA-\m_{(2,0,A)}-\m_{(e,h,\emptyset)}$. Thus we are left to show that the result holds for triples $(e,h,\emptyset)$.
		
		Each assignment of type $\m_{(e,h,\emptyset)}$, with $(e,h)\neq(2,0)$ is determined by the values $\m_{(e,h,\emptyset)}-\m_{(e',0,\emptyset)}-\m_{(e'',0,\emptyset)}$, with $e'+e''=e+2h+2$. Notice that such $e'$ and $e''$ always exist, since $e+2h+2\leq 2g+4-e\leq 2g+2$, as $e\geq 2$. Moreover, combining properties \eqref{prop1}
		and \eqref{prop2}, we obtain the system
		
		\begin{equation*}
			\begin{cases*}
				\m_{(g+1,0,\emptyset)}+\m_{(g+1,0,[n])}=d-g,\\
				0\leq\m_{(g+1,0,[n]}-\m_{(g+1,0,\emptyset)}-\m_{(2,0,[n])}\leq1,
			\end{cases*}
		\end{equation*}
		and thus, $\m_{(g+1,0,\emptyset)}$ is determined by the difference $d-g-2\m_{(g+1,0,\emptyset)}-\m_{(2,0,[n])}$.
		Finally, for each $e\geq4$, the inequality $0\leq\m_{(e,0,\emptyset)}-\m_{(e-1,0,\emptyset)}-\m_{(3,0,\emptyset)}\leq 1$ holds, and, in particular, $0\leq\m_{(4,0,\emptyset)}-2\m_{(3,0,\emptyset)}\leq 1$. Hence, the system obtained by combining all these relations, shows that, for any $e\geq 3$, the value of $\m_{(e,0,\emptyset)}$ ultimately depends on a series of differences concerning $\m_{(2,0,[n])}$, and thus only integers of the form  $\m_{(2,0,\{i\})}$, for $i=1,\dots,n$.
		
		Now, we show that for each $(e,h,A),(e',h',A'),(e'',h'',A'')\in\D_{g,n}$ such that $A=A'\cup A'', A'\cap A''=\emptyset, \text{ } 2(h+1-(h'+h''))=e'+e''-e$, and there exists a triangle whose edges have length $e,e'$ and $e''$ respectively, and for each $\L\in\textup{PicRel}^0_{g,n}(\ZZ)$, we have $\L\cdot(\mEhA-\m_{(e',h',A')}-\m_{(e'',h'',A'')})=\mEhA-\m_{(e',h',A')}-\m_{(e'',h'',A'')}$. It is enough to show it for the elements of a base of $\textup{PicRel}^0_{g,n}(\ZZ)$:
		\begin{itemize}
			\item \begin{equation*}
				\O(\Sigma_1-\Sigma_j)\cdot(\mEhA-\m_{(e',h',A')}-\m_{(e'',h'',A'')})=\mEhA-\m_{(e',h',A')}-\m_{(e'',h'',A'')},
			\end{equation*}
			trivially, since $A=A'\cup A''$ and $A'\cap A''=\emptyset$.
			\item \begin{multline*}
				\O((2g-2)\Sigma_j)\otimes\omega_\pi^{\otimes-1}\cdot(\mEhA-\m_{(e',h',A')}-\m_{(e'',h'',A'')})=\\
				=(\mEhA-\m_{(e',h',A')}-\m_{(e'',h'',A'')})+(2(h'+h''-h-1)-(e-e'-e''))=\\=\mEhA-\m_{(e',h',A')}-\m_{(e'',h'',A'')},
			\end{multline*}
			 by the assumptions on $(e,h,A),(e',h',A'),(e'',h'',A'')$.
		\end{itemize}
		
		Therefore, we can conclude that the equivalence classes for the action of $\textup{PicRel}^0_{g,n}(\ZZ)$ depend only on the assignments $\m_{(2,0,\{i\})}$, for $i=1,\dots,n$, where the generators act as 
		\begin{equation*}
			\O(\Sigma_1-\Sigma_j)\cdot \m_{(2,0\{i\})}=\m_{(2,0\{i\})}+\delta_{1,i}-\delta_{j,i},\qquad \O(2\Sigma_1)\otimes\omega_\pi^{\otimes-1}\cdot \m_{(2,0\{i\})}=\m_{(2,0\{i\})}+(2g-2)\delta_{1,i}.
		\end{equation*}
		
		Now, we can show that any universal stability condition $\m$ is equivalent to one such that $\m_{(2,0,\{1\})}\in\{0,\dots,2g-3\}$ and $\m_{(2,0,\{i\})}=0$ for each $i\neq 1$, via the relation induced by $\textup{PicRel}^0_{g,n}(\ZZ)$.
		Applying $\bigotimes^n_{j=2}\O(\Sigma_1-\Sigma_j)^{\otimes\m_{(2,0\{j\})}}$, we see that $[\m]=[\tilde{\m}]$, with 
		\begin{equation*}
			\tilde{\m}_{(2,0,\{i\})}=\begin{cases*}
				\sum_{j=1}^{n}\m_{(2,0\{j\})},\text{ for }i=1,\\
				0,\text{ for }i\neq1.
			\end{cases*}
		\end{equation*}
		Acting by $(\O(2\Sigma_1)\otimes\omega_\pi^{\otimes-1})^{\otimes-\lfloor\frac{\sum_{j=1}^{n}\m_{(2,0\{j\})}}{2g-2}\rfloor}$, we see that $\m$ is indeed equivalent to a stability condition $\m'$ with $\m'_{(2,0,\{i\})}$ equal to the class of equivalence modulo $(2g-2)$ of $\sum_{j=1}^{n}\m_{(2,0\{j\})}$.
		
		Finally, since the class of $\m'_{(2,0,\{1\})}$ modulo $(2g-2)$ can be changed only applying a combination of the generators $\O(\Sigma_1-\Sigma_j)$, whose sum of coefficients is not a multiple of $2g-2$, we deduce that two stability conditions $\m'$ and $\m''$ such that $\m'_{(2,0,\{1\})}\neq\m''_{(2,0,\{1\})}$ and $\m'_{(2,0,\{i\})}=\m''_{(2,0,\{i\})}=0$ for each $i\neq0$ are never translation equivalent, thus the claim follows.
	\end{proof} 
    
    We can then prove the second part of
    Theorem~\ref{theoremintro}:

    \begin{proof}[Proof of Theorem~\ref{theoremintro}, 2.]
    
        Clearly, given an assignment $\{\n_A\}_{\emptyset\neq A\subset[n]}$, there exist only a finite amount of degree $d$ universal stability conditions $\m$ of type $(g,n)$ such that $\m_{(2,0,A)}=\n_A$ for each $\emptyset\neq A\subset[n]$, as they need to satisfy \eqref{prop2}. Since the assignments on the triples $(1,h,A)$ are fixed, this proves the statement for each pair $(g,n)$, with $g\geq 2$ and $n\geq 1$.

        In the case of genus $g=0$ the statement is trivial, while for $g=1$ it follows from \cite[Theorem~6.5]{PTgenus1}.
        
        By \cite[Theorem~9.8]{pagani2023stability}, there is at most one degree $d$ universal stability condition of type $(g,0)$, for any genus $g$. Thus, we conclude.
    \end{proof}

    \begin{remark}
        A further consequence of Lemma~\ref{translationlemma} and part two of Theorem~\ref{theoremintro} is that degree $d$ fine compactified universal Jacobians of type $(g,n)$ are finite up to isomorphisms that commute with the forgetful morphism to $\Mbargn$. In fact, \cite[Lemma~6.13]{Kass_2019}, with $L$ of degree $0$ and $t=0$ if $d\neq 0$, holds true with the same proof.
    \end{remark}
	
	\subsection{Comparison with classical fine compactified universal Jacobians}\label{seccomparison}
	In this section, we use our classification in terms of universal stability conditions to construct explicit examples of non (semi-)classical fine compactified universal Jacobians. Not only, we show that in general, for any $g\geq 1$ the class of fine compactified universal Jacobians is strictly larger than the class of classical fine compactified universal Jacobian described in \cite{Kass_2019}, but we also find the minimum number of markings $n$ for this to be true over $\Mbargn$. 
	
	Moreover, we are able to make a further distinction between semi-classical fine compactified universal Jacobians and \emph{all} fine compactified universal Jacobians. In particular, for genus $g=2$, with the help of computations by Rhys Wells \cite{urlFCJ}, we see that there is a range $4\leq n \leq 5$ where not all the fine compactified universal Jacobians are classical, but their restrictions over all geometric points are.
	
	In the body of \cite[Section $10$]{pagani2023stability} Question~\ref{questintro} is asked.
	A complete answer is given in Part~$1.$ of Theorem~\ref{theoremintro}, which will be proven via the examples and propositions of this section, together with the following
		
	\begin{theorem}\label{comparisonthm}
		The inclusion
		\begin{equation*}
			 \begin{Bmatrix}
			 \textup{classical fine compactified}\\ \textup{universal Jacobians of type $(g,n)$}
			 \end{Bmatrix}
			 \hookrightarrow 
			 \begin{Bmatrix}
			 	\textup{Fine compactified universal}\\ \textup{Jacobians of type $(g,n)$}
			 \end{Bmatrix}
		\end{equation*}
		is an equality if and only if $(g,n)$ satisfies any of the following:
		\begin{itemize}
			\item $g=0$;
			\item $n=0$;
			\item $g=1$ and $n\leq5$;
			\item $g=2$ and $n\leq3$;
			\item $g=3$ and $n=1$.
		\end{itemize}
	\end{theorem}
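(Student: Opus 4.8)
The plan is to establish the equivalence by proving each listed case separately, handling the ``equality'' direction (every fine compactified universal Jacobian is classical) and the ``strictness'' direction (exhibiting a non-classical example) according to the pair $(g,n)$. The conceptual backbone is Theorem~\ref{mainthm} together with Proposition~\ref{cstab=vstab}: a fine compactified universal Jacobian is classical precisely when the associated universal stability condition $\m$ is induced by a classical universal stability condition $\phi$, and by the remark following Lemma~\ref{phiinequlemma} we may test classicality only on half of the triples of each vine graph. For the positive cases, the strategy is to show that \emph{every} universal stability condition $\m$ of the given type is of the form $\m(\phi)$.

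First I would dispatch the trivial and cited cases. For $g=0$ every stable curve is rational with trivial relative Jacobian, so the claim is immediate; the cases $n=0$, $g=1$ (with $n\leq 5$), and the sharpness statements are exactly the results quoted in Theorem~\ref{theoremintro} from \cite{pagani2023stability}, \cite{pagani2022geometry}, and \cite{viviani2023new}, which I would invoke directly. The substantive positive cases are $g=2$ with $n\leq 3$ and $g=3$ with $n=1$. Here the plan is to reduce, via Definition~\ref{phitocuniv} and the remark that a universal prescription is determined by its values on trivalent graphs, to a finite combinatorial check on the $2$-connected trivalent graphs in $\Ggn$. Using the explicit ceiling formula of Proposition~\ref{phitocprop}, classicality of $\m$ amounts to the solvability of a finite system: one must produce a single real divisor $\phi=(\phi_v)$ (equivalently, by Proposition~\ref{phitocprop}, finitely many reals $\phi_A$) simultaneously realising all the prescribed integers $\mEhA$ as the corresponding ceilings. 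By Lemma~\ref{translationlemma} we may normalise $\m_{(2,0,\{1\})}\in\{0,\dots,2g-3\}$ and $\m_{(2,0,\{i\})}=0$ for $i\neq 1$, reducing the verification to a finite list of stability conditions up to the $\textup{PicRel}^0_{g,n}(\ZZ)$-action; this is where the computations attributed to Rhys Wells \cite{urlFCJ} enter for $g=2$.

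For the strictness direction in the remaining cases ($g=2,\ n\geq 4$ and $g\geq 3,\ n\geq 2$, and more generally whenever $(g,n)$ is not on the list), the plan is to construct an explicit non-classical universal stability condition. Following the approach announced in Section~\ref{seccomparison}, I would exhibit a $c$-stability condition on a suitable $2$-connected trivalent graph $\Gamma\in\Ggn$ whose values $\{\mEhA\}$ satisfy conditions (i)--(ii) of Definition~\ref{cstabGamma} but cannot be simultaneously matched by any classical $\phi$ via the ceiling formula of Proposition~\ref{phitocprop}; the obstruction is that the linear constraints imposed by the various $\phi_A$ across the triples of $\D(\Gamma)$ become overdetermined once enough markings are available, so no common real divisor exists. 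One then glues this local datum to a genuine universal stability condition using Lemma~\ref{gluelemma}, and Theorem~\ref{mainthm} converts it into a non-classical fine compactified universal Jacobian.

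\textbf{Main obstacle.}
The hard part will be the sharp threshold: proving both that the constructed examples fail classicality for the \emph{minimal} $n$ and that no non-classical example exists just below that threshold. The latter requires the exhaustive finite check (the $g=2,\ n\leq 3$ and $g=3,\ n=1$ analyses), where one must verify that the ceiling system of Proposition~\ref{phitocprop} is always solvable; this is delicate because the ceilings couple the triples $(e,h,A)$ and $(e,g+1-e-h,A^{\mathsf{c}})$ through property~\eqref{prop1}, and Lemma~\ref{phiinequlemma} only halves the work rather than trivialising it. Pinning down exactly where solvability first fails, as $n$ grows, is the crux of identifying the minimal $n$ and hence of answering Question~\ref{questintro}.
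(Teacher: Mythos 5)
Your overall architecture matches the paper's: a case-by-case analysis that quotes the known results for $g=0$, $n=0$ and $g=1$; finite, computer-assisted checks (the Rhys Wells computations, after normalising via Lemma~\ref{translationlemma}) for $g=2$, $n\leq3$; an explicit hand analysis for $g=3$, $n=1$; and, for the strictness direction, explicit non-classical universal stability conditions detected through the ceiling formula of Proposition~\ref{phitocprop}. This is exactly the paper's proof (Propositions~\ref{prop23} and~\ref{prop31}, Examples~\ref{ex26}, \ref{ex24}, \ref{ex32}, \ref{exg1}).

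There is, however, a genuine gap in your strictness strategy, and it sits precisely at $(g,n)=(2,4)$ and $(2,5)$. Your plan is to exhibit, on a single $2$-connected trivalent graph $\Gamma\in\Ggn$, a $c$-stability condition that no classical $\phi$ on $\Gamma$ can match, and then glue this local datum into a universal stability condition. For genus $2$ with at most $5$ markings this is provably impossible: by Remark~\ref{rhys25}, every $c$-stability condition on any genus-$2$ graph with $n\leq 5$ markings \emph{is} induced by a classical stability condition on that graph, so the single-graph obstruction you are looking for simply does not exist — yet the theorem asserts strictness for $(2,4)$ and $(2,5)$. What is actually required is the paper's Example~\ref{ex24}: a universal stability condition $\m$ of type $(2,4)$ whose restriction to every individual dual graph is classical, but for which the four ceiling conditions in \eqref{eq24} — conditions which never all lie in $\D(\Gamma)$ for any single $\Gamma\in G_{2,4}$ — are jointly unsolvable, so no single universal classical $\phi$ can induce $\m$. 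The non-classicality there is a genuinely cross-graph phenomenon, invisible graph by graph, and your proposal has no mechanism to produce it. A secondary, smaller issue: even where single-graph obstructions do exist ($g=2$, $n\geq6$; $g=3$, $n\geq2$; $g\geq4$, $n\geq1$), your gluing step is not automatic, since Lemma~\ref{gluelemma} requires a compatible family of $c$-stability conditions on \emph{all} graphs of $\Ggn$; the paper sidesteps this by defining $\m$ globally on $\D_{g,n}$ first, verifying Definition~\ref{cstabuniv} directly, and only then restricting to a well-chosen graph to derive the contradiction.
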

	
	For the whole section, we set 
	$$\m_{(1,h,A)}=\begin{cases*}
		0, \textup{ if }1\in A\\
		d, \textup{ if }1\notin A
	\end{cases*},\textup{ for each }(1,h,A)\in\D_{g,n},$$ for any universal stability condition $\m$ of type $(g,n)$.
	
	We start by considering the case of genus $g=2$.
	\begin{example}\label{ex26}
		Let $\m$ be the degree $3$ universal stability condition on $G_{2,6}$ defined in the following way:
		\begin{equation*}
			\begin{cases*}
				\m_{(2,0,A)}=0, \text{ for each }A\subseteq[6],\\
				\m_{(3,0,A)}=\mathbbm{1}_{\S^\mathsf{c}},\\
				\m_{(2,1,A)}=2, \text{ for each }A\subseteq[6], 
			\end{cases*}
		\end{equation*}
	where $\mathbbm{1}_{\S^\mathsf{c}}$ denotes the characteristic function of the complementary of the set $\S\subset\P([6])$, such that $A\in\S$ if and only if $|A|\leq 2$ or \[A=\{1,3,5\},\{1,2,3\}\{1,5,6\}\{1,2,4\}\{1,3,6\}\{3,4,5\}\{3,4,6\}\{2,5,6\}\{2,3,5\}\{2,3,6\}.\]
	
	Since $A\in\S$ if and only if $\A^\mathsf{c}\notin\S$, we deduce that $\m$ is indeed a universal stability condition.
	
	Let $\Gamma$ be the dual graph in figure \ref{26}, where all the components have genus $0$.
	
	\begin{figure}[H]
		\caption{}
		\label{26}
		\centering
		\begin{tikzpicture}
			\node(0)[shape=circle,draw]{};
			\node(3)[right = of 0][shape=circle,draw]{};
			\node(4)[right = of 3][shape=circle,draw]{};
			\node(7)[right = of 4][shape=circle,draw]{};
			\node(1)[above right = of 0][shape=circle,draw]{};
			\node(2)[above left = of  7][shape=circle,draw]{};
			\node(5)[below right = of 0][shape=circle,draw]{};
			\node(6)[below left = of  7][shape=circle,draw]{};
			\node(l1)[above left =.2 of 1]{1};
			\node(l2)[above right =.2 of 2]{2};
			\node(l3)[above =.2 of 3]{3};
			\node(l4)[above =.2 of 4]{4};
			\node(l5)[below left =.2 of 5]{5};
			\node(l6)[below right =.2 of 6]{6};
			\draw (1) to (l1);
			\draw (2) to (l2);
			\draw (3) to (l3);
			\draw (4) to (l4);
			\draw (5) to (l5);
			\draw (6) to (l6);
			
			\draw (0) to (3);
			\draw (0) to (1);
			\draw (0) to (5);
			\draw (4) to (3);
			\draw (5) to (6);
			\draw (1) to (2);
			\draw (7) to (6);
			\draw (7) to (2);
			\draw (7) to (4);
			
		\end{tikzpicture}
	\end{figure}
	Let $\m^\Gamma:=\m_{|_{\D(\Gamma)}}$ be the stability condition on $\Gamma$ obtained by restricting $\m$ to the values $\mEhA$ such that $(e,h,A)\in\D(\Gamma)$. By Proposition~\ref{phitocprop}, for a numerical polarisation $\phi^\Gamma$ on $\Gamma$ to be such that $\m^\Gamma=\m^\Gamma(\phi^\Gamma)$, it needs to satisfy 
	
	\begin{equation*}
		\begin{cases*}
			\Bigl\lceil \frac{2+\phi^\Gamma_{\{1,2,3\}}-\phi^\Gamma_{\{4,5,6\}}}{2}\Bigr\rceil=1,\\
			\Bigl\lceil \frac{2+\phi^\Gamma_{\{1,5,6\}}-\phi^\Gamma_{\{2,3,4\}}}{2}\Bigr\rceil=1,\\
			\Bigl\lceil \frac{2+\phi^\Gamma_{\{3,4,5\}}-\phi^\Gamma_{\{1,2,6\}}}{2}\Bigr\rceil=1,\\
		\end{cases*}
	\end{equation*}
	and
	\begin{equation*}
		\begin{cases*}
			\Bigl\lceil \frac{2+\phi^\Gamma_{\{1,2,5\}}-\phi^\Gamma_{\{3,4,6\}}}{2}\Bigr\rceil=2,\\
			\Bigl\lceil \frac{2+\phi^\Gamma_{\{1,3,4\}}-\phi^\Gamma_{\{2,5,6\}}}{2}\Bigr\rceil=2,\\
			\Bigl\lceil \frac{2+\phi^\Gamma_{\{3,5,6\}}-\phi^\Gamma_{\{1,2,4\}}}{2}\Bigr\rceil=2.\\
		\end{cases*}
	\end{equation*}
	This implies
	\begin{equation*}
		\begin{cases*}
			\phi^\Gamma_{\{1\}}-\phi^\Gamma_{\{2\}}+\phi^\Gamma_{\{3\}}-\phi^\Gamma_{\{4\}}+\phi^\Gamma_{\{5\}}-\phi^\Gamma_{\{6\}}<0,\\
			\phi^\Gamma_{\{1\}}-\phi^\Gamma_{\{2\}}+\phi^\Gamma_{\{3\}}-\phi^\Gamma_{\{4\}}+\phi^\Gamma_{\{5\}}-\phi^\Gamma_{\{6\}}>0,
		\end{cases*}
	\end{equation*}
	which is clearly impossible. Therefore, this example shows that there exists a non semi-classical fine compactified universal Jacobian.
	
	Moreover, one can extend the stability condition $\m$ to $\m'$ over $\overline{\M}_{2,n}$ for any $n\geq6$, by imposing $\mEhA':=\m_{(e,h,A\cap[6])}$. Thus, the same result holds for any $n\geq6$.
	\end{example}
	
	\begin{remark}\label{rhys25}
		By work of Rhys Wells \cite{urlFCJ}, this example is minimal: given any $n$-pointed curve $X$ of genus $2$ with $n\leq 5$, every fine compactified Jacobians of $X$ is classical.
	\end{remark}

	\begin{example}\label{ex24}
		Let $\m$ be the degree $0$ universal stability condition on $G_{2,4}$ defined by the following table, where the rows are indexed by pairs $(e,h)$ and the columns by sets $A$.
		
			\begin{table}[ht!]
			\centering
			\begin{tabular}{||c|| c c c c c c c c c c c c c c c c||} 
				\hline
				& $\emptyset$ & $1$ & $2$ & $3$  & $4$ & $12$ & $13$ & $14$ & $23$ & $24$ & $34$ & $123$ & $124$ & $134$ & $234$ & $1234$\\ [0.5ex] 
				\hline\hline
				$(2,0)$& $/$ & $1$ & $0$ & $0$ & $0$ & $1$ & $1$ & $2$ & $0$ & $0$ & $0$ & $2$ & $2$ & $2$ & $0$ & $2$ \\ 
				[1ex] 
				\hline
				$(3,0)$ & $-2$ & $-1$ & $-2$ & $-2$ & $-2$ & $-1$ & $-1$ & $0$ & $-2$ & $-1$ & $-1$ & $0$ & $0$ & $0$ & $-1$ & $0$\\
				[1ex] 
				\hline
				$(2,1)$ & $-3$ & $-1$ & $-3$ & $-3$ & $-3$ & $-1$ & $-1$ & $-1$ & $-3$ & $-2$ & $-2$ & $-1$ & $-1$ & $-1$ & $-2$ & $/$\\
				[1ex] 
				\hline
			\end{tabular}
		\end{table}
		One can check that it satisfies the properties of Definition~\ref{cstabuniv}. Suppose there exists a universal numerical polarisation $\phi$ such that $\m=\m(\phi)$. It would satisfy
		\begin{equation}\label{eq24}
			\begin{cases*}
				\Bigl\lceil\frac{-1+\phi_{\{1,2\}}-\phi_{\{3,4\}}}{2}\Bigr\rceil=0,\\
				\Bigl\lceil\frac{-1+\phi_{\{1,3\}}-\phi_{\{2,4\}}}{2}\Bigr\rceil=0,\\
				\lceil\phi_{\{2,3,4\}}\rceil=1,\\
				\lceil\phi_{\{1,2,3\}}\rceil=3,
			\end{cases*}
		\end{equation}
		which altogether imply
		\begin{equation*}
			\begin{cases*}
				\phi_{\{1,2,3\}}<2,\\
				\phi_{\{1,2,3\}}>2.
			\end{cases*}
		\end{equation*}
		So, we can conclude that $\m$ is not a classical universal stability condition. However, by remark \ref{rhys25}, or simply by noticing that no single graph $\Gamma$ in $G_{2,4}$ is such that $(3,0,\{1,2\}),(3,0,\{1,3\})$, $(2,0,\{2,3,4\}),(2,0,\{1,2,3\})\in\D(\Gamma)$ and that no contradictions arise when removing any of the conditions in \eqref{eq24}, we deduce that $\m$ restricts to a classical stability condition on the dual graph of every geometric point of $\overline{\M}_{2,4}$, hence, it is semi-classical. 
		
		Moreover, as in the previous example, it is clear that this can be extended to any $n\geq4$.
	\end{example}
	
	\begin{proposition}\label{prop23}
		Every degree $d$ fine compactified universal Jacobian over $\overline{\M}_{2,n}$ is classical for $n\leq 3$.
	\end{proposition}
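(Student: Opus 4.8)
The plan is to restate everything in terms of universal stability conditions and then reduce the statement to a finite feasibility question. By Theorem~\ref{mainthm} a degree $d$ fine compactified universal Jacobian over $\Mbargn$ is the same datum as a degree $d$ universal stability condition $\m$ of type $(g,n)$, and by Definition~\ref{classicaljacdef} it is classical precisely when $\m=\m(\phi)$ for some classical universal stability condition $\phi$. So it suffices to show that, for $g=2$ and $n\le 3$, every such $\m$ arises as $\m(\phi)$. The case $n=0$ is \cite[Theorem~9.6]{pagani2023stability}, so I assume $1\le n\le 3$.

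First I would reduce to finitely many $\m$. The $\textup{PicRel}^0_{2,n}(\ZZ)$-action is translation by the multidegrees of line bundles and commutes with the assignment $\phi\mapsto\m(\phi)$, hence it preserves classicality; by Lemma~\ref{translationlemma} it is therefore enough to treat the finitely many representatives with $\m_{(2,0,\{1\})}\in\{0,1\}$ and $\m_{(2,0,\{i\})}=0$ for $i\neq 1$.

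Next I would make the equation $\m=\m(\phi)$ explicit. After the compatibility reductions of Proposition~\ref{phitocprop}, a classical universal $\phi$ enters only through the marking weights $w_i=\phi_{\{i\}}$, with $\phi_A=\sum_{i\in A}w_i$ and total bulk $u:=d-W$, where $W=\sum_i w_i$, distributed canonically. Specialising the formula of Proposition~\ref{phitocprop} to $g=2$ and using that on a trivalent graph $\phi_{A^{\mathsf c}}=W-\phi_A$ gives
\[
\mEhA(\phi)=\Bigl\lceil \phi_A+\tfrac{(e+2h-2)u}{2}-\tfrac{e}{2}\Bigr\rceil ,
\]
so that $\m=\m(\phi)$ becomes, for each $(e,h,A)\in\D_{2,n}$, the requirement that the affine function of $(w_1,\dots,w_n)$ inside the ceiling lie in the interval $(\mEhA-1,\mEhA]$. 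By Lemma~\ref{phiinequlemma} and the remark following it, it is enough to impose these constraints for one triple in each complementary pair $\{A,A^{\mathsf c}\}$ (the excluded unstable pair being exactly $(2,0,\emptyset),(2,1,[n])$). Thus producing $\phi$ amounts to showing that a finite intersection of slabs in $\RR^n$ is nonempty.

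The main step, and the genuine obstacle, is the nonemptiness of this polytope for $n\le 3$. For the triples $(2,0,A)$ the term in $u$ vanishes and the constraint reads $\sum_{i\in A}w_i\in(\m_{(2,0,A)},\m_{(2,0,A)}+1]$; property~\eqref{prop2} forces $0\le \m_{(2,0,A'\sqcup A'')}-\m_{(2,0,A')}-\m_{(2,0,A'')}\le 1$, so that $A\mapsto \m_{(2,0,A)}$ is almost additive, which is exactly what makes this additive rounding solvable when the number of markings is small. The remaining triples, in particular $(3,0,A)$, couple the $w_i$ through $u=d-W$, but the compatibility relations \eqref{prop1} and \eqref{prop2} align the residues involved. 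The key observation is that an inconsistency among the slabs, obtained by summing constraints so that the linear parts cancel while the constants force $0<0$, exactly as in Examples~\ref{ex24} and~\ref{ex26}, can only be assembled using at least four distinct markings; for $n\le 3$ there are too few subsets $A$ for such a cycle to exist, so the polytope is nonempty. Carrying this out over the finitely many representatives, either by exhibiting a suitable $\phi$ directly or by verifying nonemptiness of each slab system, finishes the proof; the finite but somewhat delicate bookkeeping of this last case analysis is the only real work.
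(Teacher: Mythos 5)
Your reduction steps are correct and in fact mirror the paper's own route: via Theorem~\ref{mainthm} and Definition~\ref{classicaljacdef} the statement becomes a claim about universal stability conditions, and Lemma~\ref{translationlemma} (together with the observation, which you rightly make explicit, that the $\textup{PicRel}^0_{2,n}(\ZZ)$-action commutes with $\phi\mapsto\m(\phi)$) cuts the problem down to the finitely many representatives with $\m_{(2,0,\{1\})}\in\{0,1\}$ and $\m_{(2,0,\{i\})}=0$ for $i\neq1$. Your specialisation of Proposition~\ref{phitocprop} to $g=2$, turning $\m=\m(\phi)$ into nonemptiness of an explicit intersection of slabs in the marking weights, is also correct (and is exactly the system that must be checked). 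The problem is that at this point the paper invokes the explicit computations of Rhys Wells \cite{urlFCJ}, which verify feasibility for every representative when $1\leq n\leq 3$, whereas you replace that verification with an assertion.

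The gap is your ``key observation'' that a Farkas-type inconsistency among the slabs can only be assembled using at least four distinct markings. This is precisely the content of the proposition, and you neither prove it nor carry out the case analysis you acknowledge is needed (``the finite but somewhat delicate bookkeeping of this last case analysis is the only real work''). Nor is the observation self-evident: counting markings alone cannot be the mechanism, since at other genera inconsistencies are produced with a single marking or two --- Example~\ref{exg1} ($g\geq4$, $n=1$) and Example~\ref{ex32} ($g=3$, $n=2$) assemble contradictory constraints from triples such as $(4,0,\{1\})$ and $(2,1,\{1\})$ whose linear parts cancel. So any argument that no such cancellation pattern exists for $g=2$, $n\leq3$ must engage with the specific triples $(e,h,A)\in\D_{2,n}$ and the relations \eqref{prop1}, \eqref{prop2}, either structurally or by exhausting the finitely many representatives; as written, your proof defers exactly the step that the paper settles by computation, and therefore does not establish the statement.
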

	\begin{proof}
		A universal stability condition $\m$ on $G_{2,n}$ is equivalent to a "universal pair" $(f^2,f^3)$ appearing in the "f2\_f3\_txt\_poly\_dims" repository in \cite{urlFCJ}, which contains computations by Rhys Wells based on a previous version of this paper. There, it is shown that, for $1\leq n\leq3$, if a universal stability condition $\m$ is such that $\m_{(2,0,\{1\})}\in\{0,1\}$ and $\m_{(2,0,\{i\})}=0$ for each $i\neq 1$, then $\m$ is classical. Thus, the claim follows from Lemma \ref{translationlemma}.
	\end{proof}

	Combining Examples \ref{ex26} and \ref{ex24}, Remark \ref{rhys25} and Proposition \ref{prop23}, we can show the following result, which describes completely the inclusion of fine (semi-)classical compactified Jacobians into the class of all fine compactified universal Jacobians over $\overline{\M}_{2,n}$.
	
	\begin{corollary}\label{g2comparison}
		The inclusions
		\begin{equation*}
			\begin{Bmatrix}
				\textup{classical fine}\\ \textup{compactified universal}\\
				\textup{Jacobians of type $(2,n)$}
			\end{Bmatrix}
			\hookrightarrow 
			\begin{Bmatrix}
				\textup{semi-classical fine}\\ \textup{compactified universal}\\
				\textup{Jacobians of type $(2,n)$}
			\end{Bmatrix}
			\hookrightarrow
			\begin{Bmatrix}
				\textup{Fine compactified}\\ \textup{universal Jacobians}\\
				\textup{of type $(2,n)$}
			\end{Bmatrix}
		\end{equation*}
		 are strict in general. Moreover, the first inclusion is an equality for $n\leq3$, while the second is an equality for $n\leq5$.
	\end{corollary}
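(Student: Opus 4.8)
The plan is to assemble the corollary from the four preceding results, after translating its statement from the language of Jacobians into that of universal stability conditions via the bijection of Theorem~\ref{mainthm} and the notion of classicality of Definition~\ref{classicaljacdef}. Under this dictionary a fine compactified universal Jacobian of type $(2,n)$ lies in the leftmost set precisely when its associated universal stability condition $\m$ equals $\m(\phi)$ for some classical universal stability condition $\phi$; it lies in the middle set precisely when, on the dual graph of every geometric point $[X]$, the restriction $\m^{\Gamma(X)}$ is of the form $\m^{\Gamma(X)}(\phi^{\Gamma(X)})$; and the rightmost set consists of all of them. Thus each inclusion, together with each strictness or equality claim, becomes a statement about universal stability conditions for $g=2$.

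For the two strictness assertions I would exhibit explicit witnesses. Example~\ref{ex24} provides, for $(g,n)=(2,4)$, a degree~$0$ universal stability condition that is not induced by any classical universal stability condition yet whose restriction to the dual graph of each geometric point is classically induced; this separates the first two sets. Example~\ref{ex26} provides, for $(g,n)=(2,6)$, a universal stability condition whose associated Jacobian has a fibre that is not classically polarised; this separates the last two sets. Since both examples extend trivially to all larger $n$, strictness holds in general.

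For the equality assertions I would invoke the structural and computational inputs directly. Proposition~\ref{prop23} states that every degree~$d$ fine compactified universal Jacobian over $\overline{\M}_{2,n}$ is classical for $n\leq3$, so in that range all three sets coincide and in particular the first inclusion is an equality. Remark~\ref{rhys25} states that for any genus~$2$ curve $X$ with at most five marked points every fine compactified Jacobian of $X$ is classical; hence for $n\leq5$ every fibre of every fine compactified universal Jacobian is classical, so the middle and rightmost sets coincide and the second inclusion is an equality.

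The point requiring care, rather than a genuine obstacle, is the bookkeeping of the two distinct notions of classicality, universal versus fibrewise, and the verification that each witnessing example genuinely lands in the claimed set. For Example~\ref{ex24} this amounts to confirming that the failure of classicality is genuinely global, i.e.\ that no single graph $\Gamma\in G_{2,4}$ has all of $(3,0,\{1,2\})$, $(3,0,\{1,3\})$, $(2,0,\{2,3,4\})$, $(2,0,\{1,2,3\})$ in $\D(\Gamma)$ at once, so that the contradiction materialises only at the universal level; this is exactly what that example records. With the dictionary fixed, the corollary is a direct synthesis of Examples~\ref{ex24} and~\ref{ex26}, Proposition~\ref{prop23}, and Remark~\ref{rhys25}.
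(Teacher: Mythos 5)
Your proposal is correct and follows exactly the paper's own route: the paper derives the corollary by combining Example~\ref{ex24} (strictness of the first inclusion, from $n=4$ on), Example~\ref{ex26} (strictness of the second, from $n=6$ on), Proposition~\ref{prop23} (equality of the first for $n\leq3$), and Remark~\ref{rhys25} (equality of the second for $n\leq5$), with the translation through Theorem~\ref{mainthm} and Definition~\ref{classicaljacdef} implicit. Your additional bookkeeping of the universal versus fibrewise notions of classicality is a faithful elaboration of what the paper leaves tacit, not a different argument.
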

	
	We now treat the case of genus $g=3$ and show that there exist non semi-classical fine compactified universal Jacobians $\overline{\J}(\m)\subset \textup{Simp}^d(\overline{\C}_{3,n}/\overline{\M}_{3,n})$ if and only if $n\geq 2$.	
	
	\begin{proposition}\label{prop31}
		Every degree $d$ fine $c$-compactified universal Jacobian over $\overline{\mathcal{M}}_{3,1}$ is classical.
	\end{proposition}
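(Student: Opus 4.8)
The plan is to reduce the statement, via the classification of Theorem~\ref{mainthm} and Definition~\ref{classicaljacdef}, to a purely combinatorial single-parameter problem. By Theorem~\ref{mainthm} it is equivalent to prove that every degree~$d$ universal stability condition $\m$ of type $(3,1)$ is classical, i.e.\ that there is a classical universal stability condition $\phi$ with $\m=\m(\phi)$. Since $n=1$ and $2g-3=3$, Lemma~\ref{translationlemma} lets us assume $\m_{(2,0,\{1\})}\in\{0,1,2,3\}$ (the condition $\m_{(2,0,\{i\})}=0$ for $i\neq1$ being vacuous), so only finitely many $\m$ remain. The guiding observation is that, because $A\subseteq[1]$ forces $A\in\{\emptyset,\{1\}\}$, the data of $\m$ is very rigid, and the induced classical stabilities depend on essentially one real number.

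Concretely, property~\eqref{prop1} pairs each triple $(e,h,A)$ with $(e,4-e-h,A^{\mathsf c})$, and by Lemma~\ref{phiinequlemma} it suffices to match $\m$ and $\m(\phi)$ on one representative of each pair, which I take with $A=\{1\}$. On an all-genus-$0$ trivalent graph in $G_{3,1}$ there are always $5$ vertices ($4$ unlabeled plus the marked one), so by compatibility the unlabeled vertices share a common weight and the marked vertex carries a weight $\phi_1:=\phi_{\{1\}}$; substituting into Proposition~\ref{phitocprop} and using the count $k=e+2h-2$ of unlabeled vertices, the dependence on the unlabeled weight cancels and the induced value becomes a function of $\phi_1$ alone,
\begin{equation*}
\m^\Gamma_{(e,h,\{1\})}(\phi)=\Bigl\lceil\tfrac{k(d-2)+(4-k)\phi_1}{4}\Bigr\rceil+h-1 .
\end{equation*}
Running over the triples of $\D_{3,1}$ with $e\geq2$ and $A=\{1\}$, namely $(2,0,\{1\}),(3,0,\{1\}),(4,0,\{1\}),(2,1,\{1\}),(3,1,\{1\})$ with $k=0,1,2,2,3$, each equation $\m^\Gamma_{(e,h,\{1\})}(\phi)=\m_{(e,h,\{1\})}$ becomes the requirement that $\phi_1$ lie in an explicit half-open interval (for $k<4$ the coefficient $4-k$ is positive, so the interval is bounded).

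I would then show that these finitely many intervals always meet. As they are intervals on the line, by the one-dimensional Helly property it suffices to check pairwise intersection, and this is exactly what properties~\eqref{prop1} and~\eqref{prop2} of $\m$ provide: the triangle inequalities of~\eqref{prop2} translate into the relations among the integers $\m_{(e,h,\{1\})}$ that force consecutive intervals to overlap (for example $(2,1,\{1\})$ and $(4,0,\{1\})$ produce the same ceiling, so~\eqref{prop2} forces $\m_{(2,1,\{1\})}=\m_{(4,0,\{1\})}+1$, which is precisely the compatibility of the two associated intervals). Choosing $\phi_1$ in the interior of the common intersection makes $\phi$ general, whence $\m=\m(\phi)$. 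This interval-overlap step is the crux; the conceptual reason the proposition holds is that a single marking gives a single degree of freedom $\phi_1$, so the contradictory cyclic system of inequalities responsible for non-classicality for larger $n$ (as in Example~\ref{ex26}) simply cannot be assembled.

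The main remaining obstacle is the non-$2$-connected case: the triples with $e=1$ arising from vine graphs with a separating edge, to which Proposition~\ref{phitocprop} does not apply directly. Here I would argue that the values $\m_{(1,h,A)}$ are already pinned down, through property~\eqref{prop2}, by the $e\geq2$ data, so that the $\phi_1$ selected above automatically induces them; alternatively one runs the analogous single-variable computation for the induced value $\lceil\phi_{Z}-\tfrac12\rceil$ of Proposition-Definition~\ref{phitocdef} across a graph with a separating edge and checks the resulting interval against the previous ones. Since after the reduction only the four cases $\m_{(2,0,\{1\})}\in\{0,1,2,3\}$ survive, this consistency can, if desired, be confirmed by the explicit computation of Rhys Wells \cite{urlFCJ}, exactly as in the genus~$2$ argument of Proposition~\ref{prop23}.
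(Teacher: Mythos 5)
Your reduction is exactly the one the paper uses: Theorem~\ref{mainthm} converts the statement into a claim about universal stability conditions, Lemma~\ref{phiinequlemma} cuts the triples to one representative per complementary pair, and Proposition~\ref{phitocprop} turns each matching condition $\m_{(e,h,A)}=\m_{(e,h,A)}(\phi)$ into an interval constraint on the single unknown $\phi_1$; your five intervals for $(2,0,\{1\}),(3,0,\{1\}),(4,0,\{1\}),(2,1,\{1\}),(3,1,\{1\})$ are, up to complementarity, precisely the system~\eqref{phisyst} in the paper (whose unknown $x$ is $\phi_1$, the paper choosing $(3,0,\emptyset)$, $(4,0,\emptyset)$, $(2,1,\emptyset)$ as representatives). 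Where you diverge is in how solvability of that system is established, and this is where your proposal has a genuine gap: you assert that properties~\eqref{prop1} and~\eqref{prop2} force all pairwise overlaps of the intervals, and you verify only the easiest instance (the coincidence of the $(2,1,\cdot)$ and $(4,0,\cdot)$ ceilings, forcing $\m_{(2,1,\{1\})}=\m_{(4,0,\{1\})}+1$, which is indeed correct). The remaining overlaps are exactly the arithmetic content of the paper's proof: there one shows that $\m_{(4,0,\emptyset)}$ and $\m_{(3,0,\emptyset)}$ are explicit functions of $\m_{(2,0,\{1\})}$ and of $d\bmod 4$, that $\m_{(3,0,\{1\})}$ has at most two admissible values with one excluded in two congruence classes, and then checks the six resulting cases. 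Helly in dimension one is a legitimate way to organise this, but each pairwise overlap still requires a derivation of the same kind (several need chains of~\eqref{prop1} and~\eqref{prop2}, not a single application), so as written the crux is asserted rather than proved.

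Two further points. First, your fallback for the separating-edge triples is wrong as stated: under the paper's reading of ``triangle'' (non-degenerate, as is needed in the proof of Lemma~\ref{trianglelemma}), no instance of property~\eqref{prop2} can involve a triple with $e=1$, because a triangle with a side of length $1$ forces the other two sides to be equal, contradicting the parity of $2(h+1-(h'+h''))=e'+e''-e$; so the values $\m_{(1,h,A)}$ are \emph{not} pinned down by the $e\geq2$ data through~\eqref{prop2}. (The paper's own proof silently ignores the triples $(1,1,\emptyset)$ and $(1,1,\{1\})$, which do correspond to stable vine curves in $\overline{\M}_{3,1}$, so you are no worse off for flagging the issue, but the repair would have to be your second suggestion, the direct computation of $\lceil\phi_Z-\tfrac12\rceil$ on a graph with a separating edge, not the first.) Second, the computations of Rhys Wells in \cite{urlFCJ} concern genus $2$ only (the pairs $(f^2,f^3)$ on $G_{2,n}$), so they are not available as a confirmation for type $(3,1)$; and the normalisation via Lemma~\ref{translationlemma} is harmless but unnecessary, while your inference that ``only finitely many $\m$ remain'' after it already presupposes the determination statement that the paper proves as its first step.
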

	\begin{proof}
		We want to show that for each universal stability condition $\m$ identifying a fine compactified universal Jacobian, there exists a universal numerical polarisation $\phi$ such that $\m=\m(\phi)$.
		
		First, we notice that, for any $4$-tuple of integers $(A,B_0,B_1,C)$, there is at most one stability condition $\m$ such that  $(\m_{(2,0,\{1\})}, \m_{(3,0,\emptyset)},\m_{(3,0,\{1\})},\m_{(4,0,\emptyset)})=(A,B_0,B_1,C)$. In fact, by both properties in definition \ref{cstabuniv},
		\begin{equation}\label{m4}
			\m_{(4,0,\{1\})}=d+1-4-\m_{(4,0,\emptyset)}\in\{\m_{(2,0,\{1\})}+\m_{(4,0,\emptyset)},\m_{(2,0,\{1\})}+\m_{(4,0,\emptyset)}+1\},
		\end{equation}
		
		and, similarly
		\begin{equation*}
			\m_{(2,1,\{1\})}=d+1-2-\m_{(2,1,\emptyset)}\in\{\m_{(2,0,\{1\})}+\m_{(2,1,\emptyset)},\m_{(2,0,\{1\})}+\m_{(2,1,\emptyset)}+1\}.
		\end{equation*}

		Thus, $C$ and $\m_{(2,1,\emptyset)}$ are integers such that
		\begin{align*}
			\frac{d-4-A}{2}&\leq C\leq \frac{d-3-A}{2}\\
			\frac{d-2-A}{2}&\leq \m_{(2,1,\emptyset)}\leq \frac{d-1-A}{2}.
		\end{align*}
		Therefore,
		\begin{equation*}
			\begin{cases*}
				(C,\m_{(2,1,\emptyset)})=(\frac{d-4-A}{2},\frac{d-2-A}{2}),\textup{ if } d-A\equiv0\mod 2,\\
				(C,\m_{(2,1,\emptyset)})=(\frac{d-3-A}{2},\frac{d-1-A}{2}),\textup{ if } d-A\equiv1\mod 2
			\end{cases*}
		\end{equation*}
		Either way, $\m_{(2,1,\emptyset)}=C+1$. Since $\mEhA$ is determined by complementarity via property \eqref{prop1}, for all the other triples $(e,h,A)\in\D_{g,n}$, our first claim is proven.
		
		It remains to show that, given any stability condition $\m$ such that $\m_{(2,0,\{1\})}=A$, $ \m_{(3,0,\emptyset)}=B_0,$ $\m_{(3,0,\{1\})}=B_1$ and $\m_{(4,0,\emptyset)}=C$, there exists a universal numerical polarisation $\phi$ such that $\m=\m(\phi)$. Since the inequalities induced by $\m_{(2,1,\emptyset)}$ and $\m_{(4,0,\emptyset)}$ are the same, by lemma \ref{phiinequlemma}, this amounts to showing that there exists a solution to the following system:
		\begin{equation}\label{phisyst}
			\begin{cases*}
				A<x<A+1\\
				d-4B_0-6<x<d-4B_0-2\\
				\frac{4B_1+2-d}{3}<x<\frac{4B_1+6-d}{3}\\
				d-2C-4<x<d-2C-2				
			\end{cases*}.
		\end{equation}
		By relation \eqref{m4}, and $2\m_{(3,0,\emptyset)}\leq\m_{(4,0,\emptyset)}\leq2\m_{(3,0,\emptyset)}+1$ in definition \ref{cstabuniv}, we obtain 
		\begin{equation*}
			\begin{cases*}
				(C,B_0)=(\frac{d-A-4}{2},\frac{d-A-4}{4}), \textup{ if } d-A\equiv0\mod4,\\
				(C,B_0)=(\frac{d-A-3}{2},\frac{d-A-5}{4}), \textup{ if } d-A\equiv1\mod4,\\
				(C,B_0)=(\frac{d-A-4}{2},\frac{d-A-6}{4}), \textup{ if } d-A\equiv2\mod4,\\
				(C,B_0)=(\frac{d-A-3}{2},\frac{d-A-3}{4}), \textup{ if } d-A\equiv3\mod4
			\end{cases*}.
		\end{equation*}
		In general, $B_1\in \{B_0+A,B_0+A+1\}$, however, using $\m_{(3,0,\emptyset)}+\m_{(3,0,\{1\})}\leq\m_{(4,0,\{1\})}\leq\m_{(3,0,\emptyset)}+\m_{(3,0,\{1\})}+1$, we can exclude $B_1=B_0+A$ if $d-A\equiv2\mod 4$ and $B_1=B_0+A+1$ if $d-A\equiv3\mod 4$. Hence we are left with $6$ cases, of which we are considering explicitly the first one only, as they are all completely analogous.
		
		Let $d-A\equiv0\mod 4$, so that 	$(C,B_0)=(\frac{d-A-4}{2},\frac{d-A-4}{4})$, and let $B_1=B_0+A$. Then system \eqref{phisyst} becomes
		\begin{equation*}
			\begin{cases*}
				A<x<A+1,\\
				\frac{3A-2}{3}<x<\frac{3A+2}{3},\\
				A-2<x<A+2,\\
				A<x<A+2,
			\end{cases*}
		\end{equation*}
		which admits a solution for each $A\in\ZZ$.
	\end{proof}
	
	\begin{example}\label{ex32}
		Let $\m$ be the degree $6$ universal stability condition for a compactified Jacobian over $\overline{\M}_{3,2}$ defined by the following table: 
		
		\begin{table}[H]
			\centering
			\begin{tabular}{||c|| c c c c c c ||} 
				\hline
				& $(2,0)$ & $(3,0)$ & $(4,0)$ & $(2,1)$  & $(3,1)$ & $(2,2)$\\ [0.5ex] 
				\hline\hline
				$\emptyset$ & $/$ & $0$ & $1$ & $2$ & $3$ & $4$\\ 
				[1ex] 
				\hline
				$1$& $0$ & $1$ & $2$ & $2$ & $3$ & $5$\\
				[1ex] 
				\hline
				$2$& $0$ & $1$ & $1$ & $3$ & $3$ & $5$\\
				[1ex] 
				\hline
				$12$& $1$ & $1$ & $2$ & $3$ & $4$ & $/$\\
				[1ex] 
				\hline
			\end{tabular}
		\end{table}
  
		Let $\Gamma$ be the dual graph of a maximally degenerate curve in $\overline{\M}_{3,2}$ in figure \ref{32}.
  
		\begin{figure}[ht!]
			\caption{}
			\label{32}
			\centering
			\begin{tikzpicture}
				\node(0)[shape=circle,draw]{};
				\node(1)[above right = of 0][shape=circle,draw]{};
				\node(2)[right = of  1][shape=circle,draw]{};
				\node(7)[below right = of 2][shape=circle,draw]{};
				\node(5)[below right = of 0][shape=circle,draw]{};
				\node(6)[below left = of  7][shape=circle,draw]{};
				\node(l1)[left =.2 of 0]{1};
				\node(l2)[right =.2 of 7]{2};

				\draw (0) to (l1);
				\draw (7) to (l2);

				\draw (0) to (1);

				\draw (5) to (6);
				\draw (1) to (2);
				\draw (7) to (2);
				
				\draw (1) to (5);
				\draw (2) to (6);
				\draw (0) to (5);
				\draw (7) to (6);
				
			\end{tikzpicture}
		\end{figure}
  
		Let $\m^\Gamma:=\m_{|_{\D(\Gamma)}}$ be the $c$-stability condition on $\Gamma$ obtained by restricting $\m$ to the values $\mEhA$ such that $(e,h,A)\in\D(\Gamma)$. By Proposition~\ref{phitocprop}, for a numerical polarisation $\phi^\Gamma$ on $\Gamma$ to be such that $\m^\Gamma=\m^\Gamma(\phi^\Gamma)$, it needs to satisfy
		\begin{equation*}
			\begin{cases*}
				\Bigl\lceil\frac{8+2\phi^\Gamma_{\{1\}}-2\phi^\Gamma_{\{2\}}}{4}\Bigr\rceil-1=2;\\
				\Bigl\lceil\frac{8+2\phi^\Gamma_{\{1\}}-2\phi^\Gamma_{\{2\}}}{4}\Bigr\rceil=2;
			\end{cases*}
		\end{equation*}
		which doesn't admit solution. Therefore $\m$ defines a fine non semi-classical compactified universal Jacobian. In particular, there exists no universal numerical polarisation $\phi$ such that $\m=\m(\phi)$.
		
		Analogously to the previous examples, this can be trivially extended to any $n\geq2$.
	\end{example}
	
	Finally, the following example shows that if the genus is at least $4$, then universal Jacobians that are non semi-classical always exist as long as there is at least $1$ marked point.
	
	\begin{example}\label{exg1}
		For any $g\geq4$, we show that there exists a fine compactified universal Jacobian over $\overline{\M}_{g,1}$ whose fibres are not all classically polarised. 
		
		Let $\m$ be the degree $g+1$ universal stability condition of type $(g,1)$ defined as follows:
		\begin{equation*}
			\mEhA=
			\begin{cases*}
				0, \text{ if } (e,h,A)=(2,0,\{1\}) \text{ or }(e,h,A)=(3,0,\{1\}),\\
				1, \text{ if } (e,h,A)=(2,1,\{1\}),\\
				h+1, \text{ if } h\geq g-2,\\
				h+\delta_{A,\{1\}}, \text{ otherwise},
			\end{cases*}
		\end{equation*}
		where $\delta_{A,\{1\}}$ denotes Kronecker's delta. We can check it satisfies both axioms of definition \ref{cstabuniv}.
		\begin{itemize}
			\item [i.] Since $g\geq e-1+h$, necessarily if $h\geq g-1$, then $e\leq 3$. Hence, we need to check only $4$ separate cases:
			\begin{align*}
				\m_{(2,0,\{1\})}+\m_{(2,g-1,\emptyset)}&=g=d+1-e,\\
				\m_{(3,0,\{1\})}+\m_{(3,g-2,\emptyset)}&=g-1=d+1-e,\\
				\m_{(2,1,\{1\})}+\m_{(2,g-2,\emptyset)}&=g=d+1-e,
			\end{align*}
			while, for each other $(e,h,A)\in\D_{g,n}$, 
			\begin{equation*}
				\mEhA+\m_{(e,g+1-e-h,A^\mathsf{c})}=g+1-e+\delta_{A,\{1\}}+\delta_{A^\mathsf{c},\{1\}}=d+1-e.
			\end{equation*}
			\item[ii.] Let $(e,h,A)\in\D_{g,n}$, and let $(e',h',A'),(e'',h'',A'')\in\D_{g,n}$ such that $A=A'\cup A'', A'\cap A''=\emptyset, \text{ } 2(h+1-(h'+h''))=e'+e''-e$, and there exists a triangle whose edges have length $e,e'$ and $e''$ respectively.
			\begin{itemize}
				\item Let $h=g-1$ Then $A=\emptyset$ and $e=2$. If $h'\geq g-2$, it must be $h''<g-2$, thus 
				\begin{equation*}
					0\leq \mEhA-h-(\m_{(e',h',A')}-h'+\m_{(e'',h'',A'')}-h'')\leq 1.
				\end{equation*}
				\item Let $h=g-2$. Then either $e=2$ or $e=3$. In the former case, $h',h''<g-2$, hence $0\leq\m_{(e',h',A')}-h'\leq\delta_{A',\{1\}}$ and $0\leq\m_{(e'',h'',A'')}-h''\leq\delta_{A'',\{1\}}$. Since $A'$ and $A''$ can not both be $\{1\}$, the property follows from $m-h=1$. 
				
				In the latter, the only different situation occurs when $h'=g-2$. Thus, either $(e'',h'',A'')=(2,0,1)$ or $(e'',h'',A'')=(3,0,1)$. Either way the property is satisfied as $\mEhA-h=1$, $\m_{(e',h',A')}-h'=1$ and $\m_{(e'',h'',A'')}-h''=0$.
				\item Let $h\leq g-3$. We can distinguish $4$ cases. 
				\begin{itemize}
					\item If $(e,h,A)=(2,0,\{1\})$, there is nothing to prove.
					\item If $(e,h,A)=(3,0,\{1\})$, then $(e',h',A')=(3,0,\emptyset)$ and $(e'',h'',A'')=(2,0,\{1\})$ (or viceversa). Thus $\mEhA-h-(\m_{(e',h',A')}-h'+\m_{(e'',h'',A'')}-h'')=0$.
					\item If $(e,h,A)=(2,1,\{1\})$, then $(e',h',A')=(3,0,\emptyset)$ and $(e'',h'',A'')=(3,0,\{1\})$ (or viceversa) or $(e',h',A')=(2,1,\emptyset)$ and $(e'',h'',A'')=(2,0,\{1\})$ (or viceversa). Thus $\mEhA-h-(\m_{(e',h',A')}-h'+\m_{(e'',h'',A'')}-h'')=0$.
					\item For each other $(e,h,A)\in\D_{g,n}$, we have $\mEhA-h=\delta_{A,\{1\}}$, $0\leq\m_{(e',h',A')}-h'\leq\delta_{A',\{1\}}$ and $0\leq\m_{(e'',h'',A'')}-h''\leq\delta_{A'',\{1\}}$. Since $A'\cup A''=A$ and $A'\cap A''=\emptyset$, the property is satisfied.
				\end{itemize}
			\end{itemize}
		\end{itemize}
		Therefore, by Theorem~\ref{mainthm}, there exists a fine compactified universal Jacobian $\overline{\J}(\m)$ over $\overline{\M}_{g,1}$ associated to $\m$.
		
		Let $\Gamma$ be the trivalent genus $g$ dual graph in figure \ref{g1}, where all vertices have genus $0$, and let $\m^\Gamma:=\m_{|_{\D(\Gamma)}}$ be the restriction of $\m$ to $\Gamma$.

		\begin{figure}[ht!]
			\caption{}
			\label{g1}
			\centering
			\begin{tikzpicture}
				\node(0)[shape=circle,draw]{};
				\node(1)[above right = of 0][shape=circle,draw]{};
				\node(2)[right = of  1][shape=circle,draw]{};
				\node(5)[below right = of 0][shape=circle,draw]{};
				\node(6)[right = of  5][shape=circle,draw]{};
				\node(l1)[left =.2 of 0]{1};
				\node(4)[right = of  	2][shape=circle,draw]{};
				\node(3)[right = of  6][shape=circle,draw]{};

                \path[-,draw]
					(4) edge[looseness=0.8, out=-55, in=55] (3);

				\draw (0) to (l1);
				
				\draw (0) to (1);
				
				\draw (5) to (6);
				\draw (1) to (2);
				\draw (1) to (5);
				\draw (2) to (6);
				\draw (0) to (5);
				\draw (4) to (2) [dashed];
				\draw (3) to (6) [dashed];
				\draw (3) to (4);
				
			\end{tikzpicture}
		\end{figure}
		By Proposition~\ref{phitocprop}, if $\m^\Gamma=\m^\Gamma(\phi^\Gamma)$ for any numerical polarisation $\phi^\Gamma$ on $\Gamma$, such a $\phi^\Gamma$ should satisfy 
		\begin{equation*}
			\begin{cases*}
				\Bigl\lceil\frac{4+(2g-4)\phi^\Gamma_{\{1\}}}{2g-2}\Bigr\rceil-1=\m_{(4,0,\{1\})}=1;\\
				\Bigl\lceil\frac{4+(2g-4)\phi^\Gamma_{\{1\}}}{2g-2}\Bigr\rceil=\m_{(2,1,\{1\})}=1;
			\end{cases*}
		\end{equation*}
		which has no solution. Therefore $\m$ defines a fine non semi-classical compactified universal Jacobian. In particular, there exists no universal numerical polarisation $\phi$ such that $\m=\m(\phi)$.
	\end{example}

	\begin{proof}[Proof of Theorem \ref{comparisonthm}]
		If and only if any of the conditions stated in the theorem holds, the inclusion is in fact an equality. We show this case by case:
		\begin{itemize}
			\item $g=0$. The statement is trivial;
			\item $g=1$. It follows from the combination of Remark $6.13$ and Example $6.15$ of \cite{PTgenus1};
			\item $g=2$ and $n\leq 3$. It is a special case of Corollary \ref{g2comparison};
			\item $g=3$ and $n=1$. It is a combination of Proposition \ref{prop31} and Example \ref{ex32};			
			\item $n=0$. The "if" statement is proven in \cite[Section 9.3]{pagani2023stability}, while Example \ref{exg1} proves the other direction.
		\end{itemize}
	\end{proof}

        Furthermore, we can give the following proof, thus answering Question~\ref{questintro}, originally posed by Pagani and Tommasi.

        \begin{proof}[Proof of Theorem~\ref{theoremintro}, Part~1.]
        
        For $g\neq 2$, it is the same as the proof of Theorem~\ref{comparisonthm}. The inclusion is strict for $g=2$ if and only if $n\leq 5$ by the combination of Example~\ref{ex26} and Remark~\ref{rhys25}.
        \end{proof}
	
    \begin{remark}
        Let $\Gamma$ be a graph. In \cite[Question~8.4]{pagani2023stability}, the authors asked whether all  stability conditions $\sigma_\Gamma$ on $\Gamma$ are induced by a numerical polarisation $\phi^\Gamma$. Example~1.27 of \cite{viviani2023new} shows that this is not the case. 

        Since $c$-stability conditions on a graph $\Gamma$ coincide with $V$-stabilities (Proposition~\ref{cstab=vstab}) that are $\Ggn$-morphisms compatible, we can improve this result by saying that the question has a negative answer for some curve in $\overline{\mathcal{M}}_{g,n}$ whenever the inclusion of Theorem~\ref{theoremintro} is strict.
    \end{remark}

	\bibliographystyle{alpha}	
	\bibliography{bibtex}

\begin{thebibliography}{MMUV22}

\bibitem[ACGH13]{arbarello2013geometry}
E.~Arbarello, M.~Cornalba, P.~Griffiths, and J.D. Harris.
\newblock {\em Geometry of Algebraic Curves: Volume I}.
\newblock Grundlehren der mathematischen Wissenschaften. Springer New York,
  2013.

\bibitem[AK80]{ALTMAN198050}
Allen~B Altman and Steven~L Kleiman.
\newblock Compactifying the picard scheme.
\newblock {\em Advances in Mathematics}, 35(1):50--112, 1980.

\bibitem[Cap94]{caporaso}
Lucia Caporaso.
\newblock A compactification of the universal {P}icard variety over the moduli
  space of stable curves.
\newblock {\em Journal of the American Mathematical Society}, 7(3):589--660,
  1994.

\bibitem[CCUW20]{CAVALIERI_2020}
Renzo Cavalieri, Melody Chan, Martin Ulirsch, and Jonathan Wise.
\newblock A moduli stack of tropical curves.
\newblock {\em Forum of Mathematics, Sigma}, 8, 2020.

\bibitem[Est01]{esteves}
Eduardo Esteves.
\newblock Compactifying the relative {J}acobian over families of reduced
  curves.
\newblock {\em Trans. Amer. Math. Soc.}, 353(8):3045--3095, 2001.

\bibitem[KP17]{Kass_2017}
Jesse~Leo Kass and Nicola Pagani.
\newblock Extensions of the universal theta divisor.
\newblock {\em Advances in Mathematics}, 321:221--268, dec 2017.

\bibitem[KP19]{Kass_2019}
Jesse~Leo Kass and Nicola Pagani.
\newblock The stability space of compactified universal {J}acobians.
\newblock {\em Transactions of the American Mathematical Society},
  372(7):4851--4887, jun 2019.

\bibitem[Mel16]{melo2016compactifications}
Margarida Melo.
\newblock Compactifications of the universal {J}acobian over curves with marked
  points, 2016.

\bibitem[MMUV22]{MMUV}
Margarida Melo, Samouil Molcho, Martin Ulirsch, and Filippo Viviani.
\newblock Tropicalization of the universal {J}acobian.
\newblock {\em Épijournal de Géométrie Algébrique}, Volume 6, August 2022.

\bibitem[OS79]{Oda1979CompactificationsOT}
Tadao Oda and C.~S. Seshadri.
\newblock Compactifications of the generalized {J}acobian variety.
\newblock {\em Transactions of the American Mathematical Society}, 253:1--90,
  1979.

\bibitem[Pan95]{pandharipande1995compactification}
R.~Pandharipande.
\newblock A compactification over $\overline{M}_g$ of the universal moduli
  space of slope-semistable vector bundles, 1995.

\bibitem[PT]{pagani2023stability}
Nicola Pagani and Orsola Tommasi.
\newblock Stability conditions for line bundles on nodal curves.
\newblock Preprint arXiv:2309.08509.

\bibitem[PT23]{PTgenus1}
Nicola Pagani and Orsola Tommasi.
\newblock Geometry of genus one fine compactified universal {J}acobians.
\newblock {\em Int. Math. Res. Not. IMRN}, 2023(10):8495--8543, 2023.

\bibitem[Sim94]{simpson}
Carlos~T. Simpson.
\newblock Moduli of representations of the fundamental group of a smooth
  projective variety {I}.
\newblock {\em Publications Math\'ematiques de l'IH\'ES}, 79:47--129, 1994.

\bibitem[Viv]{viviani2023new}
Filippo Viviani.
\newblock On the classification of fine compactified jacobians of nodal curves.
\newblock Preprint arXiv:2310.20317.

\bibitem[Wel23]{urlFCJ}
Rhys Wells.
\newblock {Stability conditions for fine compactified {J}acobians}.
\newblock
  \url{https://github.com/rhyswells101/Stability_conditions_fine_compactified_jacobians}.,
  2023.
\newblock [Online; accessed 14-June-2023].

\end{thebibliography}

    \Address

\end{document}